\theoremstyle{plain}
\newtheorem{thm}{Theorem}[section]
\newtheorem{cor}[thm]{Corollary}
\newtheorem{lem}[thm]{Lemma}
\newtheorem{prop}[thm]{Proposition}
\theoremstyle{definition}
\newtheorem{rem}[thm]{Remark}
\newtheorem{ex}[thm]{Example}
\theoremstyle{definition}
\newtheorem{defn}[thm]{Definition}
\newenvironment{thmAB}[1]{%
  \thmABinner
}{\endthmABinner}
\def\makeautorefname#1#2{\expandafter\def\csname#1autorefname\endcsname{#2}}
\newcommand{\cB}{\mathcal{B}}
\newcommand{\cP}{\mathcal{P}}
\newcommand{\cU}{\mathcal{U}}
\DeclareMathOperator{\id}{Id}
\DeclareMathOperator{\End}{End}
\DeclareMathOperator{\Res}{Res}
\definecolor{myblue}{rgb}{0,.5,1}
\definecolor{mygreen}{rgb}{.3,.75,.1}
\newcommand{\tikzdiagh}[2][]{\tikz[#1,very thick,baseline={([yshift=1ex+#2]current bounding box.center)}]}
\title[Schur--Weyl duality and Verma modules]{Schur--Weyl duality, Verma modules, and row quotients of Ariki--Koike algebras}
\author{Abel Lacabanne}
\address{Institut de Recherche en Math\'ematique et Physique\\
Universit\'e Catholique de Louvain\\ 
Chemin du Cyclotron 2\\ 
1348 Louvain-la-Neuve\\ 
Belgium}
\email{abel.lacabanne@uclouvain.be}
\author{Pedro Vaz}
\address{Institut de Recherche en Math\'ematique et Physique\\
Universit\'e Catholique de Louvain\\ 
Chemin du Cyclotron 2\\ 
1348 Louvain-la-Neuve\\ 
Belgium}
\email{pedro.vaz@uclouvain.be}
\begin{document}


\begin{abstract}
  We prove a Schur--Weyl duality between the quantum enveloping algebra of $\mathfrak{gl}_m$ and
  certain quotient algebras of Ariki--Koike algebras, which we describe explicitly.
  This duality involves several algebraically independent parameters and the module underlying it is a tensor product of a parabolic universal Verma module and a tensor power of the standard representation of $\mathfrak{gl}_m$.
We also give  a new presentation by generators and relations of the generalized blob algebras of Martin and Woodcock as well as an interpretation in terms of Schur--Weyl duality by showing they occur as a special case of our algebras.
\end{abstract}


\maketitle




\section{Introduction}\label{sec:intro}

Schur--Weyl duality is a celebrated theorem connecting the finite-dimensional modules over the general linear and the symmetric groups.
It states that, over a field $\Bbbk$ that is algebraically closed, the actions of $GL_m(\Bbbk)$ and $\mathfrak{S}_n$ on $V=(\Bbbk^m)^{\otimes n}$ commute and form double centralizers.
Several variants of (quantum) Schur--Weyl duality are known, see for example~\cite{Ariki-Terasoma-Yamada,multiparameter_SW,BDEHJLNSS,CP-SW-affine,Jimbo,Sakamoto-Shoji} for such variants related to our paper.
One particular family of generalizations of interest for us uses a module akin to the one
appearing in Schur–Weyl duality, but with an infinite-dimensional module instead of $V$. 
For example, in~\cite{ILZ} it is established a Schur--Weyl duality between $\mathcal{U}_{q}(\mathfrak{sl}_2)$ and the blob algebra
of Martin and Saleur~\cite{Martin-Saleur} with the underlying module being a tensor product of a projective Verma module with several copies of the standard representation of $\mathcal{U}_{q}(\mathfrak{sl}_2)$. 
We should warn the reader that in~\cite{ILZ} the blob algebra was called the Temperley--Lieb algebra of type $B$ (see~\cite{LNV} for further explanations).

\subsection{In this paper}
We consider the tensor product of a parabolic universal Verma module with the $m$-folded tensor product of the standard representation for $\mathcal{U}_{q}(\mathfrak{gl}_m)$ to establish a Schur--Weyl duality with a quotient of Ariki--Koike algebras.
Ariki--Koike algebras were first considered by Cherednik in \cite{cherednik} as a cyclotomic quotient of the affine Hecke algebra of type $A$. These algebras were later rediscovered and studied by Ariki and Koike \cite{ariki-koike} from a representation theoretic point of view. Independently, Broué and Malle attached in \cite{broue-malle} a Hecke algebra to certain complex reflection groups, and Ariki--Koike algebras turn out to be the Hecke algebras associated to the complex reflection groups $G(d,1,n)$. 

\smallskip

Recall that the \emph{Ariki--Koike algebra} $\mathcal{H}(d,n)$ with parameters $q\in \Bbbk^*$ and $\underline{u}=(u_1,\ldots, u_d)\in \Bbbk^d$ is the $\Bbbk$-algebra with generators $T_0,T_1,\ldots T_{n-1}$, where $T_1,\ldots T_{n-1}$ generate a finite-dimensional Hecke algebra of type $A$ and $T_0$ satisfies
$T_0T_1T_0T_1 = T_1T_0T_1T_0$, $T_0T_i=T_iT_0$ for $i>1$, and $\prod_{i=1}^d (T_0-u_i) = 0$. 
We consider the semisimple case, where the simple modules $V_{\mu}$ of $\mathcal{H}(d,n)$ are indexed by $d$-partitions of $n$. 

\smallskip

Let $\underline{m}=(m_1,\ldots,m_d)$ be a $d$-tuple of positive integers and $\cP^n_{\underline{m}}$ be the set of all  $d$-partitions  $\mu=(\mu^{(1)},\ldots,\mu^{(d)})$ of $n$ such that $l(\mu^{(i)})\leq m_i$ for all $1\leq i \leq d$.

In this paper we introduce the \emph{row-quotient} algebra $\mathcal{H}_{\underline{m}}(d,n)$, that depends on $\underline{m}$ as the quotient of $\mathcal{H}(d,n)$ by the kernel of the surjection
  \[
    \mathcal{H}(d,n) \twoheadrightarrow \prod_{\mu\in\cP^n_{\underline{m}}}\End_{\Bbbk}\left(V_{\mu}\right) .  
\]

\smallskip

Let $M^{\mathfrak{p}}(\Lambda)$ be a parabolic Verma module and $V$ the standard representation for $\mathcal{U}_{q}(\mathfrak{gl}_m)$. 
In our conventions, $\mathfrak{p}$ is standard and has Levi factor  $\mathfrak{l}=\mathfrak{gl}_{m_1}\times\cdots\times\mathfrak{gl}_{m_d}$, with $m_i\geq 1$ and $m_1+m_2+\dotsm +m_d=m$  and $\Lambda$ depends on $d$ algebraically independent parameters $\lambda_1,\ldots,\lambda_d$ (see \cref{sec:parabolic} for more details). Thanks to the braided structure on the category of integrable modules over $\mathcal{U}_{q}(\mathfrak{gl}_m)$, we define a left action of $\mathcal{H}(d,n)$ on $ M^{\mathfrak{p}}(\Lambda)\otimes V^{\otimes n}$ in \cref{sec:endomorphism}.
Our main result is:
    \begin{thmAB}{A}[\cref{thm:main_result} and \cref{lem:eignevalues}]\leavevmode
      \begin{itemize}
      \item The action of $\mathcal{U}_q(\mathfrak{gl}_m)$ and of $\mathcal{H}(d,n)$ on $M^{\mathfrak{p}}(\Lambda)\otimes V^{\otimes n}$ commute with each other, which endow $M^{\mathfrak{p}}(\Lambda)\otimes V^{\otimes n}$ with a structure of $\mathcal{H}(d,n)\otimes\mathcal{U}_q(\mathfrak{gl}_m)$-module.
      \item The algebra morphism $\mathcal{H}(d,n)\rightarrow \End_{\mathcal{U}_{q}(\mathfrak{gl}_m)}( M^{\mathfrak{p}}(\Lambda)\otimes V^{\otimes n})$ is surjective and factors through an isomorphism
        \begin{equation}\label{eq:isoSW}
          \mathcal{H}_{\underline{m}}(d,n)\overset{\simeq}{\longrightarrow} \End_{\mathcal{U}_{q}(\mathfrak{gl}_m)}(M^{\mathfrak{p}}(\Lambda)\otimes V^{\otimes n}).
        \end{equation}
      \item There is an isomorphism of $\mathcal{H}(d,n)\otimes\mathcal{U}_q(\mathfrak{gl}_m)$-modules
        \[
          M^{\mathfrak{p}}(\Lambda)\otimes V^{\otimes n} \simeq \bigoplus_{\mu\in \mathcal{P}^{n}_{\underline{m}}}V_{\mu}\otimes M^{\mathfrak{p}}(\Lambda,\mu),
        \]
        where $M^{\mathfrak{p}}(\Lambda,\mu)$ is a simple module (see~\ref{sec:parabolic}).
      \end{itemize}
    \end{thmAB}
  
\smallskip

The isomorphism in Equation~\eqref{eq:isoSW} has several particular specializations (Corollaries~\ref{cor:HeckeA}-\ref{cor:genblob}), some of them recovering well-known algebras:
\begin{itemize}
\item If $\mathfrak{p}=\mathfrak{gl}_m$ and $m\geq n$, then $\End_{\mathcal{U}_{q}(\mathfrak{gl}_m)}( M^{\mathfrak{p}}(\Lambda)\otimes V^{\otimes n})$ is isomorphic to the Hecke algebra of type $A$.

\item If $\mathfrak{p}=\mathfrak{gl}_m$ and $m=2$, then $\End_{\mathcal{U}_{q}(\mathfrak{gl}_m)}( M^{\mathfrak{p}}(\Lambda)\otimes V^{\otimes n})$ is isomorphic to the Temperley--Lieb algebra of type $A$.

\item For $\mathfrak{p}$ such that $m\geq nd$ and $m_i\geq n$ for all $1\leq i \leq d$, then $\End_{\mathcal{U}_{q}(\mathfrak{gl}_m)}( M^{\mathfrak{p}}(\Lambda)\otimes V^{\otimes n})$ is isomorphic to the Ariki--Koike algebra $\mathcal{H}(d,n)$.

\item If $\mathfrak{p}$ is such that $d=2$ and $m_1,m_2\geq n$, then $\End_{\mathcal{U}_{q}(\mathfrak{gl}_m)}( M^{\mathfrak{p}}(\Lambda)\otimes V^{\otimes n})$ is isomorphic to the Hecke algebra of type $B$ with unequal and algebraically independent parameters (see \cite[Example 5.2.2, (c)]{geck-jacon}).

\item If the parabolic subalgebra $\mathfrak{p}$ coincides with the standard Borel subalgebra of $\cU_q(\mathfrak{gl}_m)$ then $\End_{\mathcal{U}_{q}(\mathfrak{gl}_m)}( M^{\mathfrak{p}}(\Lambda)\otimes V^{\otimes n})$ is isomorphic to Martin--Woodcock's~\cite{generalized_blob} generalized blob algebra $\cB(d,n)$.
This generalizes the case of $\cU_q(\mathfrak{sl}_2)$ covered in~\cite{ILZ}.
\end{itemize}

In the last case, this gives a new interpretation of the generalized blob algebras $\cB(d,n)$ in terms of Schur--Weyl duality.
We also give a new presentation of $\cB(d,n)$ as a quotient of Ariki--Koike algebras:
\begin{thmAB}{B}[\cref{thm:blob}]
  Suppose that $\mathcal{H}(d,n)$ is semisimple and that for every $i,j,k$ we have $(1+q^{-2})u_k \neq u_i + u_j$. The generalized blob algebra $\mathcal{B}(d,n)$ is isomorphic to the quotient of $\mathcal{H}(d,n)$ by the two-sided ideal generated by the element
  \[
   \tau = \prod_{1\leq i < j \leq d}\left[(T_1-q)\left(T_0-q\frac{u_i+u_j}{q+q^{-1}}\right)(T_1-q)\right] . 
  \]
\end{thmAB}


\subsection{Connection to other works}
The idea of writing this note originated when we started thinking of possible extensions of our work in~\cite{LNV} to more general Kac--Moody algebras and were not able to find the appropriate generalizations of~\cite{ILZ} in the literature. 
When we were finishing writing this note Peng Shan informed us about~\cite{RSVV}, whose results are far beyond the ambitions of this article. Nevertheless, we expect our results to be connected to~\cite[§8]{RSVV} using a braided equivalence of categories between a category of modules for the quantum group $\mathcal{U}_q(\mathfrak{gl}_m)$ and a category of modules over the affine Lie algebra $\widehat{\mathfrak{gl}}_m$, which is due to Kazhdan and Lusztig \cite{KL}. However, the explicit description of the endomorphism algebra of $M^{\mathfrak{p}}(\Lambda)\otimes V^{\otimes n}$, which was our first motivation towards categorification later on, does not seem to appear anywhere in~\cite{RSVV} except in the particular case of our~\cref{cor:AK}.

Another motivation for the results presented here resides in the potential applications to low-dimensional topology, as indicated in~\cite{RT}.
We find that it would be also interesting to investigate the use of several Verma modules in a tensor product as suggested in~\cite{DR}.


\subsection*{Acknowledgments}
%
%
%
We would like to thank Steen Ryom-Hansen for comments on an earlier version of this paper.
The authors would also like to thank the referee for his/her numerous, detailed, and helpful comments.
The authors were supported by the Fonds de la Recherche Scientifique - FNRS under Grant no.~MIS-F.4536.19.
%





\section{Ariki--Koike algebras, row quotients and generalized blob algebras}

We recall the definition of Ariki--Koike algebras and define some quotients which will appear as endomorphism algebras of modules over a quantum group. As a particular case we recover the generalized blob algebras of Martin and Woodcock \cite{generalized_blob} and we obtain a presentation of these blob algebras that seems to be new. 

\subsection{Reminders on Ariki--Koike algebras}
\label{sec:def_AK}

Fix once and for all a field $\Bbbk$ and two positive integers $d$ and $n$ and choose 
elements $q \in \Bbbk^*$ and $u_1,\ldots,u_d \in \Bbbk$. We recall the definition of the Ariki--Koike algebra introduced in \cite{ariki-koike}, which we view as a quotient of the group algebra of the Artin--Tits braid group of type $B$.

\begin{defn}
  The \emph{Ariki--Koike algebra} $\mathcal{H}(d,n)$ with parameters $q\in\Bbbk^*$ and $\underline{u}=(u_1,\ldots, u_d)\in \Bbbk^d$ is the $\Bbbk$-algebra with generators $T_0,T_1,\ldots T_{n-1}$, the relation
  \[
    (T_i-q)(T_i+q^{-1})=0,
  \]
  the cyclotomic relation
  \[
    \prod_{i=1}^d (T_0-u_i) = 0,
  \]
  and the braid relations
  \begin{align*}
    T_iT_j &= T_iT_j\ \mathrm{if}\ \lvert i-j \rvert > 1, & T_iT_{i+1}T_i = T_{i+1}T_iT_{i+1}\ \mathrm{for}\ 1\leq i \leq n-2,
  \end{align*}
  \[
    T_0T_1T_0T_1 = T_1T_0T_1T_0.
  \]
\end{defn}

\begin{rem}
  We use different conventions than~\cite{ariki-koike}. In order to recover their definition, one should replace $q$ by $q^2$, $T_{0}$ by $a_1$, and $qT_{i-1}$ by $a_i$.  
\end{rem}


As in the type $A$ Hecke algebra, for any $w\in \mathfrak{S}_n$ we can define unambiguously $T_w$ by choosing any reduced expression of $w$.

It is shown in \cite{ariki-koike} that the algebra $\mathcal{H}(d,n)$ is of dimension $d^nn!$ and a basis is given in terms of Jucys--Murphy elements, which are recursively defined by $X_1 = T_0$ and $X_{i+1} = T_iX_iT_i$.

\begin{thm}[{\cite[Theorem 3.10, Theorem 3.20]{ariki-koike}}]
  \label{thm:basis_AK}
  A basis of $\mathcal{H}(d,n)$ is given by the set
  \[
    \left\{X_1^{r_1}\ldots X_{d}^{r_d}T_w\ \middle\vert\ 0\leq r_i < d, w\in \mathfrak{S}_n\right\}.
  \]
  Moreover, the center of $\mathcal{H}(d,n)$ is generated by the symmetric polynomials in $X_1,\ldots,X_d$.
\end{thm}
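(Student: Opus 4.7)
The plan is to establish the spanning property first, then linear independence via a dimension argument, and finally descend from the basis to the description of the center.

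For spanning, I would introduce the ``normal form'' $X_1^{r_1}\dotsm X_n^{r_n}T_w$ with $0\le r_i<d$ and show by induction on the length of a word in the generators $T_0,T_1,\dots,T_{n-1}$ that any monomial can be rewritten as a linear combination of elements in this form. Two ingredients are essential: the cyclotomic relation $\prod_{i=1}^d(X_1-u_i)=0$, which allows one to reduce the exponent of $X_1$ modulo $d$; and the recursive definition $X_{i+1}=T_iX_iT_i$, which combined with the quadratic and braid relations yields commutation-type identities between $X_i$ and $T_j$ (namely $T_jX_i=X_iT_j$ for $j\ne i-1,i$, and $T_iX_i T_i=X_{i+1}$). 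A careful bookkeeping then lets one drag the Jucys--Murphy factors to the left, transforming any product into a linear combination of normal form monomials, while the tail $T_w$ lives in the finite Hecke algebra of type~$A$ where the standard basis theorem applies.

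For linear independence I would exhibit a module on which the $d^nn!$ proposed basis elements act by linearly independent endomorphisms, or alternatively argue dimensionally. The cleanest approach is to construct a faithful representation on $(\bigoplus_i \Bbbk v_i)^{\otimes n}\otimes \Bbbk\mathfrak{S}_n$ where $T_0$ acts by a diagonal matrix with eigenvalues $u_1,\dots,u_d$ on the first tensor factor and $T_1,\dots,T_{n-1}$ act by a combination of the $R$-matrix of the type~$A$ Hecke algebra on adjacent tensor factors together with permutation on the $\mathfrak{S}_n$ component. Showing that this module has dimension $d^nn!$ and that the action of the normal form monomials is triangular with respect to a natural filtration gives linear independence and thereby the basis theorem.

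Once the basis is in hand, the center statement follows in two steps. Symmetric polynomials in $X_1,\dots,X_n$ are central because each $X_i$ commutes with every $T_j$ for $j\ne i-1,i$, and the elementary symmetric polynomials transform correctly under $T_i$ by the identity $T_iX_iT_i=X_{i+1}$ combined with $(T_i-q)(T_i+q^{-1})=0$. Conversely, given a central element $z=\sum_{r,w}c_{r,w}X^rT_w$, one projects it onto the subspace spanned by pure Jucys--Murphy monomials using the (in the semisimple case) separation of central characters by the joint spectrum of $(X_1,\dots,X_n)$, which the Ariki--Koike theory furnishes; the resulting polynomial must be invariant under all simple transpositions and hence symmetric.

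The main obstacle is the spanning step: the mixed relations between $T_0=X_1$ and the higher braid generators interact nontrivially with the cyclotomic relation, so the reduction to normal form requires a well-chosen induction parameter (typically the lexicographic pair consisting of word length and the total degree in $X_1$) to guarantee termination. Everything else is then standard Hecke-algebra combinatorics plus the general principle that symmetric functions in Jucys--Murphy elements generate the center.
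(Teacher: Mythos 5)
The theorem as printed in the paper has a typo that you implicitly corrected: both the basis monomials and the center description should involve all $n$ Jucys--Murphy elements $X_1,\ldots,X_n$ (otherwise the cardinality would be $d^d\,n!$, not the correct $d^nn!$). The paper itself gives no proof, only a citation of Ariki--Koike.

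Your spanning argument has the right shape, but it understates the real difficulty. The cyclotomic relation only controls powers of $X_1=T_0$; the higher elements $X_i=T_{i-1}\cdots T_1T_0T_1\cdots T_{i-1}$ do \emph{not} satisfy $\prod_j(X_i-u_j)=0$ in $\mathcal{H}(d,n)$, so reducing $X_i^d$ is not a direct consequence of the cyclotomic relation. (Already for $d=1$, $n=2$, one has $X_2=u_1T_1^2\neq u_1$.) One has to unwind $X_i$ back to $X_1$ through the quadratic and braid relations, reduce, and push forward again; this is exactly the careful induction Ariki--Koike carry out, and one must check that it terminates for the chosen induction parameter. The genuine gap, though, is the lower bound on the dimension. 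The representation you sketch on $(\bigoplus_i\Bbbk v_i)^{\otimes n}\otimes\Bbbk\mathfrak{S}_n$ is not given precisely, and it is not clear that letting $T_0$ act diagonally on one tensor slot while $T_1,\ldots,T_{n-1}$ act by an $R$-matrix actually satisfies $T_0T_1T_0T_1=T_1T_0T_1T_0$, nor why that action would be faithful or triangular. Ariki--Koike instead construct the seminormal modules $V_\mu$ on standard $d$-multitableaux, show these are pairwise non-isomorphic absolutely simple modules, and invoke $\sum_\mu(\dim V_\mu)^2=d^nn!$ to force the spanning set to be a basis; without some comparably concrete device the independence half is unproven. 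Your center argument is essentially correct in the semisimple case (though you also implicitly use $X_iX_j=X_jX_i$, which itself needs an inductive proof, and you need to know that the subalgebra of symmetric functions in the $X_i$'s already has dimension equal to the number of $d$-partitions of $n$ to conclude it exhausts the center); this matches the spirit of Ariki--Koike's Theorem~3.20.
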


We end this section with a semisimplicity criterion due to Ariki \cite{ariki}, which in our conventions takes the following form.

\begin{thm}[{\cite[Main Theorem]{ariki}}]
  The algebra $\mathcal{H}(d,n)$ is semisimple if and only if
  \[
    \left(\prod_{\substack{-n < l < n\\ 1 \leq i < j \leq d}}(q^{2l}u_i-u_j)\right)\left(\prod_{1\leq i \leq n}(1+q^2+q^4+\ldots + q^{2(i-1)})\right) \neq 0.
  \]
\end{thm}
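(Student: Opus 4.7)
The plan is to prove this via cellular algebra theory combined with an explicit computation of Gram determinants. By the work of Dipper--James--Mathas, the Ariki--Koike algebra $\mathcal{H}(d,n)$ is cellular, with a cellular basis indexed by pairs of standard $\mu$-tableaux as $\mu$ ranges over the set of $d$-multipartitions of $n$. This produces cell (Specht) modules $S^\mu$ equipped with symmetric bilinear forms $\langle\cdot,\cdot\rangle_\mu$, and by the Graham--Lehrer theory, $\mathcal{H}(d,n)$ is split semisimple if and only if the Gram determinant $\det G_\mu$ is nonzero for every such $\mu$.

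First I would verify generic semisimplicity: over the field of fractions $\bQ(q,u_1,\ldots,u_d)$, a Tits-deformation argument identifies $\mathcal{H}(d,n)$ with the group algebra of the complex reflection group $G(d,1,n)$, so all cell modules are simple and pairwise non-isomorphic with $\sum_\mu (\dim S^\mu)^2 = d^n n!$, matching the dimension count of \cref{thm:basis_AK}. Specialization then preserves semisimplicity precisely when each cell form remains non-degenerate, so the problem reduces to computing the zero locus of $\prod_\mu \det G_\mu$.

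The core of the argument is the explicit computation of $\det G_\mu$. Passing to a seminormal basis indexed by standard $\mu$-tableaux, the Jucys--Murphy elements $X_i$ act diagonally with eigenvalue $q^{2 c(i,\mathsf{t})}u_{k(i,\mathsf{t})}$, where $c(i,\mathsf{t})$ is the content of the box holding $i$ in $\mathsf{t}$ and $k(i,\mathsf{t})$ is the component in which it lies. The action of each $T_j$ is then determined by rational functions in the neighbouring eigenvalues, and an analogue of the classical Young seminormal form computation factorizes $\det G_\mu$ into linear terms of the shape $q^{2a}u_i - q^{2b}u_j$ (arising from box exchanges between different components) and $q$-integers $[k]_{q^2}=1+q^2+\cdots+q^{2(k-1)}$ (from adjacent transpositions within a single component of $\mu$).

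The main obstacle, and the bulk of the work, is then the global bookkeeping: after rewriting $q^{2a}u_i - q^{2b}u_j = q^{2b}(q^{2(a-b)}u_i - u_j)$ and collecting contributions across every $\mu$ and every standard tableau, one must check that the exponents $a-b$ that actually occur sweep through all of $\{-(n-1),\ldots,n-1\}$ and that the hook-length constraints force every $[i]_{q^2}$ for $1\leq i \leq n$ to appear. Once this is done, the vanishing locus of $\prod_\mu \det G_\mu$ coincides with that of
\[
\left(\prod_{\substack{-n < l < n\\ 1 \leq i < j \leq d}}(q^{2l}u_i-u_j)\right)\left(\prod_{1\leq i \leq n}(1+q^2+q^4+\cdots + q^{2(i-1)})\right),
\]
and the ($\Leftarrow$) direction follows from cellular algebra theory, while ($\Rightarrow$) is obtained by observing that any vanishing factor exhibits a nonzero radical element in some $S^\mu$, obstructing semisimplicity.
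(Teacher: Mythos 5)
The paper does not prove this statement; it simply cites Ariki's original theorem (\emph{On the semi-simplicity of the Hecke algebra of $(\mathbb{Z}/r\mathbb{Z})\wr\mathfrak{S}_n$}, 1994), so there is no in-paper argument to compare against. Your reconstruction via cellular algebras, Tits deformation for generic semisimplicity, and a factorization of Gram determinants of the Specht modules is a legitimate \emph{modern} route: it is essentially the Dipper--James--Mathas cellularity together with the James--Mathas Gram determinant formula. It differs genuinely from Ariki's original proof, which predates the notion of a cellular algebra: Ariki shows directly that, under the displayed non-vanishing condition, the explicitly constructed modules $V_\mu$ of \cref{prop:AK_rep} are well defined, absolutely irreducible and pairwise non-isomorphic, and then invokes the dimension count $\sum_\mu(\dim V_\mu)^2 = d^n n!$; the converse is handled by exhibiting non-split extensions or failures of the seminormal construction when a factor vanishes.

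That said, as written your argument has a genuine gap. The single claim carrying all the weight --- that $\det G_\mu$ factorizes into terms $q^{2a}u_i - q^{2b}u_j$ (for boxes in distinct components) and $q$-integers $[k]_{q^2}$ (for boxes within one component) --- is itself a nontrivial theorem (James--Mathas), and you neither prove it nor reduce it to anything you have established. You explicitly defer both this factorization and the ``global bookkeeping'' needed to match its zero locus with Ariki's product; but the bookkeeping also requires a two-sided check that you do not perform: that no extraneous factors appear in any $\det G_\mu$, and that for \emph{each} exponent $l\in\{-(n-1),\ldots,n-1\}$, each pair $i<j$, and each $1\le k\le n$, some $\mu$ actually contributes the corresponding factor. (The content-difference bound $|l|\le n-1$ is not automatic from ``each component has at most $n$ boxes''; it needs the observation that a box of content $a$ forces at least $|a|+1$ boxes in its component, and one must then realize the extremal values.) Until the factorization lemma is supplied, the proposal is a plan for a proof rather than a proof.
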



\subsection{Modules over Ariki--Koike algebras}

In this section, we suppose that the algebra $\mathcal{H}(d,n)$ is semisimple. In \cite{ariki-koike}, Ariki and Koike gave a construction of the simple $\mathcal{H}(d,n)$-modules, using the combinatorics of multipartitions.

\subsubsection{$d$-partitions and the Young lattice}

A partition $\mu$ of $n$ of length $l(\mu)=k$ is a non-increasing sequence $\mu_1 \geq \mu_2 \geq \cdots \geq \mu_k > 0$ of integers summing to $\lvert\mu\rvert=n$. A $d$-partition of $n$ is a $d$-tuple of partitions $\mu=(\mu^{(1)},\ldots,\mu^{(d)})$ such that $\sum_{i=1}^d\lvert\mu^{(i)}\rvert = n$. Given a $d$-partition $\mu$ its Young diagram is the set
\[
  [\mu]=\left\{(a,b,c)\in\mathbb{N}\times\mathbb{N}\times\{1,\ldots,d\}\ \middle\vert\  1 \leq a \leq l(\mu), 1 \leq b \leq \mu^{(c)}_a\right\},
\]
whose elements are called boxes. We usually represents a Young diagram as a $d$-tuple of sequences of left-aligned boxes, with $\mu_a^{(c)}$ boxes in the $a$-th row of the $c$-th component.

\begin{ex}
  The Young diagram of the $3$-partition $((2,1),\emptyset,(3))$ of $6$ is
  \[
    \ytableausetup{aligntableaux=center}
    \left(\ydiagram{2,1},\emptyset,\ydiagram{3}\right).
  \]
\end{ex}

A box $\gamma$ of $[\mu]$ is said to be \emph{removable} if $[\mu]\setminus \{\gamma\}$ is the Young diagram of a $d$-partition $\nu$, and in this case the box $\gamma$ is said to be \emph{addable} to $\nu$.

\begin{ex}
  The removable boxes of the $3$-partition $((2,1),\emptyset,(3))$ below are depicted with a cross
  \[
    \ytableausetup{mathmode,aligntableaux=center}
    \left(
      \begin{ytableau}
        \phantom{a} & \times \\
        \times
      \end{ytableau},
      \emptyset,
      \begin{ytableau}
        \phantom{a} & \phantom{a} & \times
      \end{ytableau}
      \right).
  \]
\end{ex}

With respect to the above the definitions, we will also use the evident notions of adding a box to a Young diagram or removing a box from a Young diagram.

We consider the Young lattice for $d$-partitions and some sublattices. It is a graph with vertices consisting of $d$-partitions of any integers, and there is an edge between two $d$-partitions if and only if one can be obtained from the other by adding or removing a box.

\begin{ex}
  The beginning of the Young lattice for $2$-partitions is the following:
  \[    
    \begin{tikzcd}
      & & \left(\emptyset,\emptyset\right)\ar[dl,no head]\ar[dr,no head] & & \\
      & \left(\ydiagram{1},\emptyset\right)\ar[dl,no head]\ar[d,no head]\ar[dr,no head] & & \left(\emptyset,\ydiagram{1}\right)\ar[dl,no head]\ar[d,no head]\ar[dr,no head] & \\
      \left(\ydiagram{1,1},\emptyset\right) & \left(\ydiagram{2},\emptyset\right) & \left(\ydiagram{1},\ydiagram{1}\right) & \left(\ydiagram{2},\emptyset\right) & \left(\ydiagram{1,1},\emptyset\right).      
    \end{tikzcd}
  \]
\end{ex}

If we fix $\underline{m}=(m_1,\ldots,m_d)\in \mathbb{N}^d$, we then define $\mathcal{P}^n_{\underline{m}}$ as the set of $d$-partitions $\mu$ such that $l(\mu^{(i)})\leq m_i$. We will also consider the corresponding sublattice of the Young lattice.

\begin{ex}
  For $m_1=1$ and $m_2=2$, the beginning of the Young lattice for $2$-partitions $\mu$ with $l(\mu^{(1)})\leq 1$ and $l(\mu^{(2)})\leq 2$ is the following:
  \[    
    \begin{tikzcd}
      & & \left(\emptyset,\emptyset\right)\ar[dl,no head]\ar[dr,no head] & & \\
      & \left(\ydiagram{1},\emptyset\right)\ar[d,no head]\ar[dr,no head] & & \left(\emptyset,\ydiagram{1}\right)\ar[dl,no head]\ar[d,no head]\ar[dr,no head] & \\
      & \left(\ydiagram{2},\emptyset\right) & \left(\ydiagram{1},\ydiagram{1}\right) & \left(\ydiagram{2},\emptyset\right) & \left(\ydiagram{1,1},\emptyset\right).     
    \end{tikzcd}
  \]
\end{ex}

We end this subsection with the notion of a standard tableau of shape $\mu$ where $\mu$ is a $d$-partition of $n$. Such a standard tableau is a bijection $\mathfrak{t}\colon [\mu]\rightarrow \{1,\ldots,n\}$ such that for all boxes $\gamma=(a,b,c)$ and $\gamma'=(a',b',c)$ we have $\mathfrak{t}(\gamma) < \mathfrak{t}(\gamma')$ if $a=a'$ and $b<b'$ or $a<a'$ and $b=b'$. Giving a standard tableau of shape $\mu$ is equivalent to giving a path in the Young lattice from the empty $d$-partition to the $d$-partition $\mu$.

\begin{ex}
  The standard tableau
  \[
    \left(
      \begin{ytableau}
        1 \\
        4
      \end{ytableau},
      \emptyset,
      \begin{ytableau}
        2 & 3 
      \end{ytableau}
      \right)
    \]
    of shape $((1,1),\emptyset,(2))$ correspond to the path
    \[
      \begin{tikzcd}
        \ytableausetup{aligntableaux=center,smalltableaux}
        \left(\emptyset,\emptyset,\emptyset\right)\ar[r] & \left(\ydiagram{1},\emptyset,\emptyset\right)\ar[r] & \left(\ydiagram{1},\emptyset,\ydiagram{1}\right)\ar[r,] & \left(\ydiagram{1},\emptyset,\ydiagram{2}\right)\ar[r] & \left(\ydiagram{1,1},\emptyset,\ydiagram{2}\right).
      \end{tikzcd}
    \]
\end{ex}

\subsubsection{Constructing the simple modules}

We present the construction of simple modules of the Ariki--Koike algebra following \cite[Section 3]{ariki-koike}. This construction is similar to the classical construction of simple modules of the symmetric group, the Hecke algebra of type $A$ or of the complex reflection group $G(d,1,n)$. This construction describes explicitly the action of the Ariki--Koike algebra on a vector space. For $\mu  = (\mu ^{(1)},\ldots,\mu ^{(d)})$ a $d$-mul\-ti\-par\-ti\-tion of $n$, we set
\[
  V_\mu  = \bigoplus_{\mathfrak{t}}\Bbbk v_{\mathfrak{t}},
\]
where the sum is over all the standard tableaux of shape $\mu$. Ariki and Koike gave an explicit action of the generators on the basis of $V_\mu$ given by the standard tableaux. The action of $T_0$ is diagonal with respect to this basis:
\[
  T_0 v_{\mathfrak{t}} = u_c v_{\mathfrak{t}},
\]
where $c$ is such that $\mathfrak{t}(1,1,c)=1$. The action of $T_i$ is more involved and depends on the relative positions of the numbers $i$ and $i+1$ in the tableau $\mathfrak{t}$:
\begin{enumerate}
\item if $i$ and $i+1$ are in the same row of the standard tableau $\mathfrak{t}$, then $T_i v_{\mathfrak{t}} = q v_{\mathfrak{t}}$,
\item if $i$ and $i+1$ are in the same column of the standard tableau $\mathfrak{t}$, then $T_i v_{\mathfrak{t}} = -q^{-1} v_{\mathfrak{t}}$,
\item if $i$ and $i+1$ neither appear in the same row nor the same column of the standard tableau $\mathfrak{t}$, then $T_i$ will act on the two dimensional subspace generated by $v_{\mathfrak{t}}$ and $v_{\mathfrak{s}}$, where $\mathfrak{s}$ is the standard tableau obtained from $\mathfrak{t}$ by permuting the entries $i$ and $i+1$. The explicit matrix is given in \cite{ariki-koike} and we will not need it.
\end{enumerate}

\begin{prop}[{\cite[Theorem 3.7]{ariki-koike}}]
  \label{prop:AK_rep}
  If $\mu$ is any $d$-multipartition of $n$, the space $V_\mu$ is a well-defined $\mathcal{H}(d,n)$-module and it is absolutely simple. A set of isomorphism classes of simple $\mathcal{H}(d,n)$-modules is moreover given by $\{V_\mu\}_{\mu}$, for $\mu$ running over the set of $d$-partitions of $n$.
\end{prop}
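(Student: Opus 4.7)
The plan is to follow the standard ``seminormal form'' strategy for cyclotomic Hecke algebras, splitting the argument into three parts: (i) checking that the formulas really define an action, (ii) establishing absolute simplicity of each $V_\mu$, and (iii) obtaining completeness by a dimension count.

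First I would verify that the formulas extend to a well-defined $\mathcal{H}(d,n)$-module structure on $V_\mu$ by checking the defining relations one by one on the tableau basis $\{v_{\mathfrak{t}}\}$. The cyclotomic relation $\prod_{i=1}^d(T_0-u_i)=0$ is immediate since $T_0$ acts diagonally with eigenvalues among the $u_c$'s, and the quadratic relation $(T_i-q)(T_i+q^{-1})=0$ reduces in the first two cases (same row, same column) to an obvious scalar identity and in the third case to a $2\times 2$ matrix computation on $\Bbbk v_{\mathfrak{t}}\oplus \Bbbk v_{\mathfrak{s}}$ whose characteristic polynomial one checks to be $(X-q)(X+q^{-1})$. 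The commutation relations $T_iT_j=T_jT_i$ for $|i-j|>1$ follow because $T_i$ and $T_j$ act on disjoint pairs of adjacent entries. The remaining braid relations $T_iT_{i+1}T_i=T_{i+1}T_iT_{i+1}$ and the type-$B$ relation $T_0T_1T_0T_1=T_1T_0T_1T_0$ are the technical heart of the verification: one reduces to a subspace spanned by the (at most six, respectively four) standard tableaux obtained by permuting the entries $\{i,i+1,i+2\}$, respectively $\{1,2\}$, and compares matrices. The type-$B$ relation exploits crucially that $T_0$ preserves the diagonal and that $T_1$ mixes only tableaux whose first two boxes lie in different components or different rows/columns.

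Next I would show that each $V_\mu$ is absolutely simple. The key observation is that the Jucys--Murphy elements $X_k=T_{k-1}\cdots T_1T_0T_1\cdots T_{k-1}$ act triangularly on the basis $\{v_{\mathfrak{t}}\}$, and in fact one can show by induction on $k$ that they act diagonally with an eigenvalue depending only on the box of $\mathfrak{t}$ containing $k$ (namely $q^{2(b-a)}u_c$ if $\mathfrak{t}^{-1}(k)=(a,b,c)$). Since two distinct standard tableaux of shape $\mu$ necessarily differ in the position of some $k$, the joint spectrum of $(X_1,\ldots,X_n)$ on $V_\mu$ is multiplicity free, and any submodule is a sum of one-dimensional joint eigenspaces $\Bbbk v_{\mathfrak{t}}$. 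Finally, the explicit formulas for the $T_i$ in case (3) show that from any $v_{\mathfrak{t}}$ one can reach $v_{\mathfrak{s}}$ for any adjacent transposition $\mathfrak{s}$, and since the graph on standard tableaux of shape $\mu$ generated by such moves is connected, we conclude $V_\mu$ is simple; absoluteness is automatic since the eigenvalues pin down the basis over any field extension.

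Finally I would compare dimensions. We have $\dim V_\mu=\#\{\text{standard tableaux of shape }\mu\}$, and the classical identity
\[
  \sum_{\mu\in\mathcal{P}^n}(\dim V_\mu)^2 = d^n\, n!
\]
(proved via a Robinson--Schensted-type bijection between pairs of standard $d$-tableaux of the same shape and pairs $(w,\underline{r})$ with $w\in\mathfrak{S}_n$ and $\underline{r}\in\{0,\ldots,d-1\}^n$) matches $\dim\mathcal{H}(d,n)$ given by \cref{thm:basis_AK}. Under the semisimplicity assumption of this subsection, Wedderburn's theorem then forces the $\{V_\mu\}_\mu$ to be a complete and non-redundant list of isomorphism classes of simple modules; non-isomorphism between distinct $V_\mu$'s is also directly visible from the multiset of $(X_1,\ldots,X_n)$-eigenvalues, which determines $\mu$.

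The main obstacle is the type-$B$ braid relation $T_0T_1T_0T_1=T_1T_0T_1T_0$: unlike the Hecke braid relation, it does not appear in the classical $A$-type story and requires organizing the tableaux according to the positions of $1$ and $2$ and treating the cases where these entries lie in the same component versus different components separately. Once this is established the rest of the argument is essentially bookkeeping.
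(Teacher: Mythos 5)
The paper does not give a proof of this proposition; it is imported directly from \cite[Theorem~3.7]{ariki-koike}. Your sketch follows the same seminormal-form strategy used there (and in the classical type $A$ case): verify the relations on the tableau basis, diagonalize the Jucys--Murphy elements and use connectivity of the tableau graph under adjacent swaps to get simplicity, then count dimensions against $d^n n!$ and invoke Wedderburn. The outline is correct as far as it goes. One point worth making explicit: the multiplicity-freeness of the joint $(X_1,\ldots,X_n)$-spectrum on $V_\mu$, and the non-vanishing of the off-diagonal entries of the $2\times 2$ matrix in case (3) (needed for connectivity), both rely on the standing semisimplicity hypothesis for $\mathcal{H}(d,n)$ from the start of the subsection; two distinct standard tableaux differ in the position of some entry, but for this to yield distinct eigenvalues $u_c q^{2(b-a)}$ one needs Ariki's genericity conditions on $q$ and $u_1,\ldots,u_d$, which you should cite rather than treat as automatic.
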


The action of the Jucys--Murphy elements is also diagonal in the basis of standard tableaux:
\begin{equation}
  \label{eq:JM_eigen}
  X_i v_{\mathfrak{t}} = u_cq^{2(b-a)}v_{\mathfrak{t}},
\end{equation}
where $\mathfrak{t}(a,b,c)=i$. A useful consequence of \cref{prop:AK_rep} is the following: if $V$ is a simple $\mathcal{H}(d,n)$-module and $v\in V$ is a common eigenvector for $X_1,\ldots,X_d$ with eigenvalues as in \eqref{eq:JM_eigen} for some standard tableau $\mathfrak{t}$ of shape $\mu$, then $V$ is isomorphic to $V_{\mu}$.

From the explicit description of the modules $V_{\mu}$, using the standard inclusion $\mathcal{H}(n,d)\hookrightarrow \mathcal{H}(n+1,d)$, it is easy to see that for any $d$-partition of $n+1$ we have
\[
  \Res_{\mathcal{H}(n,d)}^{\mathcal{H}(n+1,d)}(V_{\mu}) \simeq \bigoplus_{\nu} V_{\nu},
\]
where the sum is over all $d$-partition $\nu$ of $n$ whose Young diagram is obtained by deleting one removable box from the Young diagram of $\mu$. The branching rule of the inclusions $\mathcal{H}(1,d)\subset\mathcal{H}(2,d)\subset\cdots\subset\mathcal{H}(n,d)$ is therefore governed by the Young lattice of $d$-partitions.

\subsection{Row quotients of $\mathcal{H}(d,n)$ and generalized blob algebras}

We now define the row quotients of $\mathcal{H}(d,n)$ which will appear later as endomorphism algebras of a tensor product of modules for $\mathcal{U}_q(\mathfrak{gl}_m)$.

\begin{defn}
  Let $\underline{m}=(m_1,\ldots,m_d)\in\mathbb{N}^d$ and recall that the algebra $\mathcal{H}(d,n)$ is assumed to be semisimple, which implies that $\mathcal{H}(d,n) \simeq \prod_{\mu}\End_{\Bbbk}\left(V_{\mu}\right)$, the product being over all $d$-partitions of $n$. Recall also that $\mathcal{P}^n_{\underline{m}}$ is the set of $d$-partitions of $n$ with $i$-th component of length at most $m_i$.
  
  The \emph{$\underline{m}$-row quotient of} $\mathcal{H}(d,n)$, denoted $\mathcal{H}_{\underline{m}}(d,n)$, is the quotient of $\mathcal{H}(d,n)$ by the kernel of the surjection
  \[
    \mathcal{H}(d,n) \twoheadrightarrow \prod_{\mu\in \mathcal{P}^n_{\underline{m}}}\End_{\Bbbk}\left(V_{\mu}\right).
\]
\end{defn}

\begin{rem}
  \label{rem:asymptotic_AK}
  If $m_i\geq n$ for all $1\leq i \leq d$ then $\mathcal{H}_{\underline{m}}(d,n)\simeq \mathcal{H}(d,n)$.
\end{rem}

Similar to the case of $\mathcal{H}(d,n)$, we have inclusions $\mathcal{H}_{\underline{m}}(1,d)\subset\mathcal{H}_{\underline{m}}(2,d)\subset\cdots\subset\mathcal{H}_{\underline{m}}(n,d)$ and the branching rule is governed by the corresponding truncation of the Young lattice of $d$-partitions.

\medskip

\subsubsection{Generalized blob algebras}

In the particular case where $m_i=1$ for all $1 \leq i \leq d$, we recover the definition of the generalized blob algebras \cite[Equation (14)]{generalized_blob}, which we denote by $\mathcal{B}(d,n)$. Under a mild hypothesis on the parameters, we give a presentation of $\mathcal{B}(d,n)$.


We consider the following element of $\mathcal{H}(d,n)$:
\[
  \tau = \prod_{1\leq i < j \leq d}\left[(T_1-q)\left(T_0-q\frac{u_i+u_j}{q+q^{-1}}\right)(T_1-q)\right].
\]
This element may look cumbersome, but can be better understood thanks to the following lemma:

\begin{lem}
  \label{lem:relJM}
  The two-sided ideal of $\mathcal{H}(d,n)$ generated by $\tau$ is equal to the two-sided ideal generated by
  \[
      (T_1-q)\prod_{1\leq i < j \leq d}\left(X_1+X_2-(u_i+u_j)\right).
  \]
\end{lem}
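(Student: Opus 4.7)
The plan is to show that the two elements differ by an explicit nonzero scalar, so that they manifestly generate the same two-sided ideal. To do this, I will first simplify each factor $(T_1-q)(T_0-c_{ij})(T_1-q)$ of $\tau$ (with $c_{ij} = q(u_i+u_j)/(q+q^{-1})$) into a form involving $X_1+X_2$, and then rearrange the product.

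The main input is the identity
\[
(T_1-q)\, X_1\, (T_1-q) = -q\,(X_1+X_2)\,(T_1-q),
\]
which I would prove by direct calculation: expand $(T_1-q)X_1(T_1-q) = X_2 - q(T_1 X_1 + X_1 T_1) + q^2 X_1$ using $T_1 X_1 T_1 = X_2$, then substitute $T_1 X_1 = X_2 T_1 - (q-q^{-1})X_2$ and $T_1 X_2 = X_1 T_1 + (q-q^{-1})X_2$ (both derived from $T_1 X_1 T_1 = X_2$ together with $T_1^{-1} = T_1 - (q-q^{-1})$), and simplify. A useful byproduct of these commutation relations is that $T_1$ commutes with $X_1+X_2$.

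Combining this identity with $(T_1-q)^2 = -(q+q^{-1})(T_1-q)$ and the simplification $c_{ij}(q+q^{-1})/q = u_i+u_j$ gives
\[
(T_1-q)(T_0 - c_{ij})(T_1-q) = -q\,(T_1-q)\bigl(X_1+X_2 - (u_i+u_j)\bigr).
\]
Since $X_1+X_2$ commutes with $T_1-q$, all the factors $A_{ij} := X_1+X_2-(u_i+u_j)$ in $\tau = \prod_{i<j}[-q(T_1-q)A_{ij}]$ can be pushed to the right. Setting $N=\binom{d}{2}$, one is left with $(T_1-q)^N$, which by repeated use of $(T_1-q)^2=-(q+q^{-1})(T_1-q)$ equals $[-(q+q^{-1})]^{N-1}(T_1-q)$. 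Hence
\[
\tau \;=\; (-q)^N\bigl[-(q+q^{-1})\bigr]^{N-1}\,(T_1-q)\prod_{1\leq i<j\leq d}\bigl(X_1+X_2 - (u_i+u_j)\bigr).
\]
The scalar on the right is nonzero: the standing semisimplicity hypothesis forces $1+q^2\neq 0$, hence $q+q^{-1}\neq 0$. The two elements therefore generate the same two-sided ideal.

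The main potential pitfall is getting the coefficients right in the identity $(T_1-q)X_1(T_1-q) = -q(X_1+X_2)(T_1-q)$; everything else is formal bookkeeping using that $X_1+X_2$ is ``central'' with respect to $T_1$ and that $T_1-q$ satisfies a quadratic relation with itself.
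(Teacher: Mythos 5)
Your proof is correct and follows essentially the same route as the paper's: reduce each factor $(T_1-q)(T_0-c_{ij})(T_1-q)$ to a nonzero scalar times $(T_1-q)\bigl(X_1+X_2-(u_i+u_j)\bigr)$, then collapse the product using $(T_1-q)^2=-(q+q^{-1})(T_1-q)$ and the commutativity of $T_1$ with $X_1+X_2$. You simply spell out the reduction the paper dismisses as ``a simple computation,'' and your sign $-q$ is in fact the correct one (the paper's displayed identity has $+q$, an immaterial slip since either sign generates the same two-sided ideal).
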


\begin{proof}
  A simple computation in $\mathcal{H}(d,n)$ shows that
  \[
    (T_1-q)\left(T_0-q\frac{u_i+u_j}{q+q^{-1}}\right)(T_1-q) = q\left(X_1+X_2-(u_i+u_j)\right)(T_1-q).
  \]

  We therefore conclude using the fact that $(T_1-q)^2 = -(q+q^{-1})(T_1-q)$ and that $T_1$ commutes with $X_1+X_2$.
\end{proof}

We now investigate which $\mathcal{H}(d,n)$-modules $V_\mu$ factor through the quotient by the two-sided ideal generated by $\tau$.

\begin{prop}
  \label{prop:action_tau}
 The element $\tau$ acts by zero on $V_\mu$ if and only if $l(\mu^{(k)})\leq 1$ for every $k$ such that $(1+q^{-2})u_k \neq u_i + u_j$ for all $i,j$.
\end{prop}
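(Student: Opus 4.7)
The plan is to apply \cref{lem:relJM} to reduce the problem to the simpler element $\widetilde{\tau} := (T_1-q)\prod_{1\leq i<j\leq d}(X_1+X_2-(u_i+u_j))$, and then to exploit the diagonal action of the Jucys--Murphy elements on the standard tableau basis of $V_\mu$. Since $V_\mu$ is simple, its annihilator is a two-sided ideal, so $\tau$ acts by zero on $V_\mu$ if and only if the two-sided ideal generated by $\tau$ is contained in that annihilator; by \cref{lem:relJM} this ideal coincides with the one generated by $\widetilde{\tau}$, hence the question reduces to when $\widetilde{\tau}v_{\mathfrak{t}}=0$ for every standard tableau $\mathfrak{t}$ of shape $\mu$. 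Writing $\widetilde{\tau}=(T_1-q)P$ with $P:=\prod_{i<j}(X_1+X_2-(u_i+u_j))$, one uses that $T_1$ commutes with $X_1+X_2$ and that by \eqref{eq:JM_eigen} the operator $P$ is diagonal in the basis $\{v_{\mathfrak{t}}\}$. This gives the key identity $\widetilde{\tau}v_{\mathfrak{t}}=\lambda_{\mathfrak{t}}(T_1-q)v_{\mathfrak{t}}$, where $\lambda_{\mathfrak{t}}$ is the scalar by which $P$ acts on $v_{\mathfrak{t}}$.

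The core of the proof is then a case analysis on the positions of $1$ and $2$ in $\mathfrak{t}$. Let $k$ denote the component containing $1$, so $X_1v_{\mathfrak{t}}=u_kv_{\mathfrak{t}}$. If $2$ lies in the same row as $1$, then $(T_1-q)v_{\mathfrak{t}}=0$ and so $\widetilde{\tau}v_{\mathfrak{t}}=0$. If $2$ lies in a different component $c\neq k$, then $X_1+X_2$ acts by $u_k+u_c$, which is of the form $u_i+u_j$ with $i<j$, so $\lambda_{\mathfrak{t}}=0$ and again $\widetilde{\tau}v_{\mathfrak{t}}=0$. Finally, if $2$ lies in the same column as $1$ (necessarily in component $k$), then $X_1+X_2$ acts by $u_k(1+q^{-2})$ and $(T_1-q)v_{\mathfrak{t}}=-(q+q^{-1})v_{\mathfrak{t}}$, which is nonzero by the semisimplicity hypothesis; in this case $\widetilde{\tau}v_{\mathfrak{t}}=0$ if and only if $u_k(1+q^{-2})=u_i+u_j$ for some $i<j$.

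To conclude, I observe that a standard tableau of shape $\mu$ with $1$ and $2$ in the same column of component $k$ exists precisely when $l(\mu^{(k)})\geq 2$. Combining the three cases, $\widetilde{\tau}$ annihilates $V_\mu$ if and only if every $k$ with $l(\mu^{(k)})\geq 2$ satisfies $u_k(1+q^{-2})\in\{u_i+u_j\mid i<j\}$, which is the contrapositive of the stated condition. The main subtlety is the same-column case, where one must check that $(T_1-q)v_{\mathfrak{t}}\neq 0$; this reduces to $q+q^{-1}\neq 0$, itself a consequence of the semisimplicity criterion.
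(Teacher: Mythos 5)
Your proof is correct and follows essentially the same route as the paper's: reduce via \cref{lem:relJM} to the element $(T_1-q)\prod_{i<j}\left(X_1+X_2-(u_i+u_j)\right)$, then examine its action on each standard-tableau basis vector $v_{\mathfrak{t}}$ according to the relative position of $1$ and $2$ in $\mathfrak{t}$. If anything you are slightly more careful than the paper in spelling out why $(T_1-q)v_{\mathfrak{t}}\neq 0$ in the same-column case and in noting that such a tableau exists exactly when $l(\mu^{(k)})\geq 2$.
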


\begin{proof}
  Suppose that $\mu$ and $k$ are such that $l(\mu^{(k)})\geq 2$ with $(1+q^{-2})u_k \neq u_i + u_j$ for all $i,j$. Then there exist a tableau $\mathfrak{t}$ of shape $\mu$ such that $1$ and $2$ are in the first two columns of the $k$-th component of the Young diagram of $\mu$. By definition of $V_\mu$, the generator $T_1$ acts on $v_{\mathfrak{t}}$ by multiplication by $-q^{-1}$. The Jucys--Murphy element $X_1$ acts on $v_{\mathfrak{t}}$ by multiplication by $u_k$ whereas the Jucys--Murphy element $X_2$ acts on $v_{\mathfrak{t}}$ by multiplication by $q^{-2}u_k$. Therefore, thanks to \cref{lem:relJM}, $\tau$ does not act by zero on $V_\mu$.

  It remains to check that $\tau$ acts by zero on $V_\mu$ with $l(\mu^{(k)})\leq 1$ whenever $(1+q^{-2})u_k \neq u_i + u_j$ for all $i,j$. Let $\mathfrak{t}$ be a standard tableau of shape $\mu$. If $1$ and $2$ are in the same component of the tableau $\mathfrak{t}$, then either $1$ and $2$ are in the same row and $T_1$ acts on $v_{\mathfrak{t}}$ by multiplication by $q$, either $1$ and $2$ are in the same column and $X_1+X_2$ acts on $\mathfrak{t}$ by multiplication by $(1+q^{-2})u_k$. The second case is possible only if there exists $i,j$ such that $(1+q^{-2})u_k = u_i+u_j$ and then $\tau$ acts by zero. If $1$ and $2$ are in two different Young diagrams and $X_1+X_2$ acts on $\mathfrak{t}$ by $u_k+u_l$, where $k$ (resp. $l$) is such that $\mathfrak{t}(1,1,k)=1$ (resp $\mathfrak{t}(1,1,l)=2$). In both cases, $\tau$ acts by zero. 
\end{proof} 

\begin{thm}
  \label{thm:blob}
  Suppose that $\mathcal{H}(d,n)$ is semisimple and that for every $i,j,k$ we have $(1+q^{-2})u_k \neq u_i + u_j$. The generalized blob algebra $\mathcal{B}(d,n)$ is isomorphic to the quotient of $\mathcal{H}(d,n)$ by the two-sided ideal generated by $\tau$.
  
\end{thm}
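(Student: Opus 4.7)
The plan is to deduce the theorem directly from \cref{prop:action_tau} together with the semisimplicity hypothesis. Since $\mathcal{H}(d,n)$ is assumed semisimple, the Wedderburn decomposition $\mathcal{H}(d,n)\simeq \prod_{\mu}\End_{\Bbbk}(V_{\mu})$ implies that two-sided ideals of $\mathcal{H}(d,n)$ are in bijection with subsets of the set of isomorphism classes of simple modules. In particular, for any element $x\in\mathcal{H}(d,n)$, the quotient of $\mathcal{H}(d,n)$ by the two-sided ideal generated by $x$ is isomorphic to $\prod_{\mu\in S_x}\End_{\Bbbk}(V_{\mu})$, where $S_x$ is the set of $d$-partitions $\mu$ of $n$ such that $x$ acts by zero on $V_{\mu}$.

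Applying this to $x=\tau$, the key step is to identify $S_{\tau}$ explicitly. By \cref{prop:action_tau}, $\tau$ acts by zero on $V_{\mu}$ if and only if $l(\mu^{(k)})\leq 1$ for every $k$ such that $(1+q^{-2})u_k\neq u_i+u_j$ for all $i,j$. Under the standing hypothesis of the theorem, namely $(1+q^{-2})u_k\neq u_i+u_j$ for all $i,j,k$, this condition simplifies to $l(\mu^{(k)})\leq 1$ for every $1\leq k \leq d$, i.e.\ $\mu\in \mathcal{P}^n_{\underline{m}}$ with $\underline{m}=(1,1,\ldots,1)$.

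It then follows that
\[
  \mathcal{H}(d,n)/\langle\tau\rangle \simeq \prod_{\mu\in\mathcal{P}^n_{(1,\ldots,1)}}\End_{\Bbbk}(V_{\mu}),
\]
which is by definition $\mathcal{H}_{\underline{m}}(d,n)$ with $\underline{m}=(1,1,\ldots,1)$, i.e.\ the generalized blob algebra $\mathcal{B}(d,n)$.

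No step in this argument is really the hard part; the genuine work has already been done in \cref{lem:relJM} and \cref{prop:action_tau}, which together compute the action of $\tau$ on each simple module. The only subtlety to watch for is that the hypothesis $(1+q^{-2})u_k\neq u_i+u_j$ is used precisely to ensure the ``for every $k$'' quantifier in \cref{prop:action_tau} is unconditional, so that $S_{\tau}$ is exactly $\mathcal{P}^n_{(1,\ldots,1)}$ and not some strictly larger set of $d$-partitions.
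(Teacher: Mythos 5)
Your proof is correct and takes essentially the same approach as the paper: both hinge on \cref{prop:action_tau} combined with the Wedderburn decomposition coming from semisimplicity. Where the paper phrases it as a two-step argument (first exhibit a surjection $\mathcal{H}(d,n)/\langle\tau\rangle\twoheadrightarrow\mathcal{B}(d,n)$, then show its kernel is trivial by matching simple modules), you compute the quotient in one go by identifying $S_\tau$ directly; this is a cleaner packaging of the same reasoning rather than a different route.
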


\begin{proof}
  Recall that we supposed that $m_1=\ldots=m_d=1$. Thanks to \cref{prop:action_tau}, the element $\tau$ is in the kernel of the surjection
  \[
    \mathcal{H}(d,n) \twoheadrightarrow \prod_{\mu\in \mathcal{P}^n_{\underline{m}}}\End_{\Bbbk}\left(V_{\mu}\right).
  \]
  Therefore, we have a surjection $\mathcal{H}(d,n)/\mathcal{H}(d,n) \tau \mathcal{H}(d,n) \twoheadrightarrow \mathcal{B}(d,n)$. Once again, thanks to \cref{prop:action_tau}, the simple modules of $\mathcal{H}(d,n)/\mathcal{H}(d,n) \tau \mathcal{H}(d,n)$ are exactly the $V_\mu$ with $\mu\in \mathcal{P}^n_{\underline{m}}$ which shows that the above surjection is an isomorphism.
\end{proof}



\section{Quantum $\mathfrak{gl}_{m}$, parabolic Verma modules and tensor products}\label{sec:gln}

We recall the definition of the quantum enveloping algebra of $\mathfrak{gl}_m$,
and we also recall some basic properties of its modules, \emph{e.g.} concerning parabolic Verma modules. 

\subsection{The quantum enveloping algebra of $\mathfrak{gl}_{m}$}

Let $q$ be an indeterminate. The following definition of $\mathcal{U}_q(\mathfrak{gl}_{m})$ is over the field $\mathbb{Q}(q)$, but, via scalar extension, we will also consider it over a field containing $\mathbb{Q}(q)$ without further notice.

\begin{defn}
  The quantum enveloping algebra $\mathcal{U}_q(\mathfrak{gl}_{m})$ is the $\mathbb{Q}(q)$-algebra with generators $L_i^{\pm 1}, E_j$ and $F_j$, for $1\leq i \leq m$ and $1\leq j \leq m-1$ with the following relations:
  \begin{align*}
    L_i^{\pm 1}L_i^{\mp 1} &= 1, & L_iL_j &= L_jL_i,\\
    L_iE_j &= q^{\delta_{i,j}-\delta_{i,j+1}}E_jL_i, & L_iF_j &= q^{-\delta_{i,j}+\delta_{i,j+1}}F_jL_i,
  \end{align*}
  \[
    [E_i,F_j] = \delta_{i,j}\frac{L_iL_{i+1}^{-1}-L_i^{-1}L_{i+1}}{q-q^{-1}},
  \]
  and the quantum Serre relations
  \begin{align*}
    E_iE_j &= E_jE_i\ \mathrm{if}\ \lvert i-j \rvert > 1, & E_i^2E_{i\pm 1} - (q+q^{-1})E_iE_{i\pm 1}E_i + E_{i\pm 1}E_i^2 &= 0,\\
    F_iF_j &= F_jF_i\ \mathrm{if}\ \lvert i-j \rvert > 1, & F_i^2F_{i\pm 1} - (q+q^{-1})F_iF_{i\pm 1}F_i + F_{i\pm 1}F_i^2 &= 0.
  \end{align*}
\end{defn}

We endow it with a structure of a Hopf algebra, with comultiplication $\Delta$, counit $\varepsilon$ and antipode $S$ given on generators by the following:
\begin{align*}
  \Delta(L_i) &= L_i\otimes L_i, & \varepsilon(L_i) &= 1, & S(L_i) = L_i^{-1},\\
  \Delta(E_i) &= E_i\otimes 1 + L_iL_{i+1}^{-1} \otimes E_i, & \varepsilon(E_i) &= 0, & S(E_i) = -L_i^{-1}L_{i+1}E_i,\\
  \Delta(F_i) &= F_i\otimes L_i^{-1}L_{i+1} + 1 \otimes F_i, & \varepsilon(F_i) &= 0, & S(F_i) = -F_iL_iL_{i+1}^{-1}.\\  
\end{align*}

Set $\mathcal{U}_q(\mathfrak{gl}_{m})^0$ as the subalgebra generated by $(L_i)_{1\leq i \leq m}$, and $\mathcal{U}_q(\mathfrak{gl}_{m})^{\geq 0}$ as the subalgebra generated by $(L_i,E_j)_{\substack{1\leq i \leq m\\1\leq j \leq m-1}}$.

We denote by $P=\bigoplus_{i=1}^m\mathbb{Z}\varepsilon_i$ the weight lattice of $\mathfrak{gl}_m$ with $\mathbb{Z}$-basis given by the fundamental weights $(\varpi_i)_{1\leq i \leq m}$ where $\varpi_i=\varepsilon_1+\cdots+\varepsilon_i$. We denote by $Q$ the root lattice with $\mathbb{Z}$-basis given by the simple roots $(\alpha_i)_{1\leq i \leq d-1}$ where $\alpha_i=\varepsilon_i-\varepsilon_{i+1}$. Denote by $\Phi^+$ the set of positive roots, by $P^+$ the set of dominant weights for $\mathfrak{gl}_m$, that is $\mu=\sum_{i=1}^m\mu_i\varepsilon_i$ with $\mu_1\geq\mu_2\geq\cdots\geq\mu_m$. We also endow $P$ with the standard non-degenerate bilinear form: $\langle\varepsilon_i , \varepsilon_j\rangle=\delta_{i,j}$. The symmetric group $\mathfrak{S}_m$ acts on $P$ by permuting the coordinates and leaves the bilinear form $\langle\cdot,\cdot\rangle$ invariant. Finally, let $\rho$ be the half-sum of the positive roots.

We will often work with extensions $\mathbb{Z}[\beta_1,\ldots,\beta_k]\otimes_{\mathbb{Z}} P$, where the $\beta_i$'s are indeterminates and we also extend the bilinear form $\langle\cdot,\cdot\rangle$ to $\mathbb{Z}[\beta_1,\ldots,\beta_k]\otimes_{\mathbb{Z}} P$.

\subsection{Weights and parabolic Verma modules}
\label{sec:parabolic}

Suppose that our field $\Bbbk$ contains the field $\mathbb{Q}(q)$ and let $M$ be an $\mathcal{U}_q(\mathfrak{gl}_{m})$-module over the ground field $\Bbbk$. An element $v\in M$ is said to be a weight vector if $L_iv = \varphi(\varepsilon_i)v$, where $\varphi \colon P \rightarrow \Bbbk$ is the corresponding weight. The module $M$ is said to be a weight module if the action of the elements $L_1,\ldots,L_m$ is simultaneously diagonalizable. A highest weight module is a weight module $M$ such that $M=\mathcal{U}_q(\mathfrak{gl}_{m})v$, where $v$ is a weight vector such that $E_iv=0$ for $1\leq i \leq m-1$.

It is well-known that finite dimensional weight $\mathcal{U}_q(\mathfrak{gl}_m)$-modules of type $1$ are parameterized by the set $P^+$ of dominant weights.

In this paper, we will be interested in modules over the field $\mathbb{Q}(q,\lambda_1,\ldots,\lambda_k)$, where $\lambda_i=q^{\beta_i}$ is an indeterminate (recall that $q$ is formal and so $q^{\beta_i}$ is also formal). Moreover, we only consider type $1$ modules, where the weights are of the form
\[
  \varphi(\nu) = q^{\langle\mu,\nu\rangle},
\]
for some $\mu\in \mathbb{Z}[\beta_1,\ldots,\beta_k]\otimes_{\mathbb{Z}}P$ and for all $\nu\in P$.

\medskip

We now turn to parabolic Verma modules. Let $\mathfrak{p}$ be a standard parabolic subalgebra of $\mathfrak{gl}_m$ with Levi factor $\mathfrak{l}=\mathfrak{gl}_{m_1}\times\cdots\times\mathfrak{gl}_{m_d}$, where $m_i\geq 1$ and $\sum_{i=1}^dm_i=m$. Denote by $I$ the set $\left\{\tilde{m}_i\ \vert\ 1\leq i \leq d-1\right\}$, where $\tilde{m}_i=m_1+\ldots+m_i$, so that $\mathcal{U}_q(\mathfrak{l})$ is generated by $L_i,E_j$ and $F_j$ for $1 \leq i \leq m$ and $j\not\in I$ and $\mathcal{U}_q(\mathfrak{p})$ is generated by $L_i,E_j$ and $F_k$ for $1 \leq i \leq m$, $1\leq j \leq m-1$ and $k\not\in I$.
Denote by $P^+_i$ the set of dominant weights for $\mathfrak{gl}_{m_i}$. 
We identify the set $P^+_1\times\cdots\times P^+_d$ with the dominant weights $P^+_{\mathfrak{l}}$ of $\mathfrak{l}$ by the following map
\[
  (\mu^{(1)},\ldots,\mu^{(d)}) \rightarrow \sum_{i=1}^d\left(\sum_{j=1}^{m_i}\mu_{j}^{(i)}\varepsilon_{\tilde{m}_{i-1}+j}\right).
\]

For a dominant weight $\mu\in P_{\mathfrak{l}}^+$, we have an simple integrable finite dimensional $\mathcal{U}_q(\mathfrak{l})$-module $V^{\mathfrak{l}}(\Lambda,\mu)$ of highest weight
\[
  \Lambda_\mu=\sum_{i=1}^d\left(\sum_{j=1}^{m_i}(\beta_i+\mu_{j}^{(i)})\varepsilon_{\tilde{m}_{i-1}+j}\right).
\]
Indeed, one can check that ${\langle\Lambda_\mu,\alpha_i\rangle} \in \mathbb{N}$ for any $i\not\in I$. We turn this $\mathcal{U}_q(\mathfrak{l})$-module into a $\mathcal{U}_q(\mathfrak{p})$-module by setting $E_i V^{\mathfrak{l}}(\Lambda,\mu) =0$ for all $i\in I$. Then the parabolic Verma module $M^{\mathfrak{p}}(\Lambda,\mu)$ is
\[
  M^{\mathfrak{p}}(\Lambda,\mu) = \mathcal{U}_q(\mathfrak{gl}_{m})\otimes_{\mathcal{U}_q(\mathfrak{p})}V^{\mathfrak{l}}(\Lambda,\mu).
\]
It is a highest weight module of highest weight $\Lambda_{\mu}$. If $\mu=0$, then we will simply denote this module by $M^{\mathfrak{p}}(\Lambda)$ and its highest weight by $\Lambda$.

\begin{lem}
 For any $\mu \in P_{\mathfrak{l}}^+$, the parabolic Verma module $M^{\mathfrak{p}}(\Lambda,\mu)$ is simple.
\end{lem}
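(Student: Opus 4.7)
The strategy is to reduce simplicity to the standard Jantzen-type criterion and then observe that the genericity of the parameters $\beta_1,\ldots,\beta_d$ rules out all obstructions. Recall the classical fact (whose quantum analogue at a formal parameter $q$ is well known, see e.g.\ Jantzen's book on quantum groups): a parabolic Verma module $M^{\mathfrak{p}}(\Lambda_\mu)$ is simple if and only if, for every positive root $\alpha\in\Phi^+\setminus\Phi^+_{\mathfrak{l}}$ and every positive integer $r$, one has $\langle\Lambda_\mu+\rho,\alpha^\vee\rangle\neq r$. Equivalently, the ``shifted'' quantum character $q^{2\langle\Lambda_\mu+\rho,\alpha^\vee\rangle}$ should not equal $q^{2r}$ for any $r\in\mathbb{Z}_{>0}$. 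Since $\mathfrak{gl}_m$ is simply laced we may freely identify $\alpha$ and $\alpha^\vee$.

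Next I would compute this pairing explicitly. A positive root outside $\Phi^+_{\mathfrak{l}}$ has the form $\alpha=\varepsilon_k-\varepsilon_l$ with $k<l$ lying in two \emph{different} blocks of the Levi, say $\tilde m_{i-1}<k\le\tilde m_i$ and $\tilde m_{j-1}<l\le\tilde m_j$ with $1\le i<j\le d$. Writing $k=\tilde m_{i-1}+a$ and $l=\tilde m_{j-1}+b$, the explicit form of $\Lambda_\mu$ given in the excerpt and the usual formula $\rho=\sum_{k=1}^m\tfrac{m+1-2k}{2}\varepsilon_k$ yield
\[
\langle\Lambda_\mu+\rho,\alpha^\vee\rangle=(\beta_i-\beta_j)+\bigl(\mu^{(i)}_a-\mu^{(j)}_b\bigr)+(l-k).
\]
All summands after $\beta_i-\beta_j$ are integers.

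Now the key point: the $\beta_1,\ldots,\beta_d$ are algebraically independent indeterminates over $\mathbb{Q}(q)$, so for every $i\neq j$ the difference $\beta_i-\beta_j$ is transcendental over $\mathbb{Q}$, hence in particular is never an integer. It follows that $\langle\Lambda_\mu+\rho,\alpha^\vee\rangle\notin\mathbb{Z}$, and a fortiori it is never a positive integer. The simplicity criterion is therefore satisfied for every root outside the Levi, which proves that $M^{\mathfrak{p}}(\Lambda,\mu)$ is simple.

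The only step that requires a small amount of care is the statement of the quantum Jantzen criterion in our setting, where the base field $\Bbbk$ contains the auxiliary parameters $\lambda_i=q^{\beta_i}$; but since $q$ is a formal parameter and the $\beta_i$ are algebraically independent, the module-theoretic situation for $\mathcal{U}_q(\mathfrak{gl}_m)$ is the generic one, and the classical Kac--Kazhdan / Jantzen-type argument (building singular vectors from Shapovalov-type calculations on the highest weight line) transports verbatim. The bookkeeping with positive roots across blocks is the only computation; it is routine and presents no real obstacle.
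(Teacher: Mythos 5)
Your proof is correct and takes essentially the same approach as the paper: both rest on the observation that the algebraic independence of the $\beta_i$'s forces $\langle\Lambda_\mu+\rho,\alpha^\vee\rangle$ to be non-integral for every root $\alpha$ crossing two Levi blocks, so that the integral Weyl group of $\Lambda_\mu$ sits inside $W_{\mathfrak{l}}$ and no singular vector can exist. The paper simply states this in compressed form (checking only the simple roots $\alpha_i$ with $i\in I$, since any cross-block root contains such an $\alpha_i$ and hence the term $\beta_i-\beta_{i+1}$), whereas you spell out the Jantzen-type criterion and the explicit pairing for an arbitrary $\varepsilon_k-\varepsilon_l$; the content is the same.
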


\begin{proof}
  Since for any $i\in I$ the scalar product $\langle \Lambda_\mu, \alpha_i\rangle$ is not an integer, as one easily checks, the claim follows.
\end{proof}

\begin{rem}
  If the parabolic subalgebra $\mathfrak{p}$ is the Borel subalgebra $\mathfrak{b}$ of upper triangular matrices, we have $\mathcal{U}_q(\mathfrak{p}) = \mathcal{U}_q(\mathfrak{gl}_m)^{\geq 0}$ and the parabolic Verma module $M^{\mathfrak{b}}(\Lambda)$ is the universal Verma module. The adjective universal means that any parabolic Verma module can be obtained from $M^{\mathfrak{b}}(\Lambda)$ by specialization of the parameters.
\end{rem}

In the rest of this article, all dominant weights $\mu\in P^+_{\mathfrak{l}}$ will satisfy $\mu_{m_i}^{(i)}\geq 0$ for all $1\leq i \leq d$, and it will be convenient to identify such a weight $\mu$ with the corresponding $d$-partition in $\mathcal{P}^n_{\underline{m}}$. 
We will use the same notation $\mu$ to denote the $d$-partition or the corresponding dominant weight.

We also denote by $V$ the standard representation of $\mathfrak{gl}_m$ of dimension $m$.  Explicitly, this is a highest weight module with highest weight $\varepsilon_1$, it has as a basis $v_1,\ldots,v_m$ and the action of $\mathcal{U}_q(\mathfrak{gl}_m)$ is given by
\[
  L_i\cdot v_j = q^{\delta_{i,j}}v_j,\quad E_i\cdot v_j = \delta_{i+1,j}v_{j-1}\quad\text{and}\quad F_i\cdot v_j = \delta_{i,j}v_{j+1}.
\]

\subsection{Tensor products and branching rule}

As $\mathcal{U}_q(gl_{m})$ is a Hopf algebra, its category of modules can be endowed with a tensor product. Explicitly, given $M$ and $N$ two modules over a ground ring $R$, the action of the generators on $M\otimes_R N$ is given using the comultiplication: for all $v\in M$ and $w\in N$, one have
\begin{multline*}
  L_i\cdot(v\otimes w) = L_i\cdot v \otimes L_i \cdot w,\quad E_i\cdot (v\otimes w) = E_i\cdot v\otimes w + L_iL_{i+1}^{-1}\cdot v \otimes E_i\cdot w\\\text{and}\quad F_i\cdot (v\otimes w) = F_i\cdot v\otimes L_i^{-1}L_{i+1}^{-1}\cdot w + v \otimes F_i\cdot w.
\end{multline*}

We will write $\otimes$ instead of $\otimes_R$ to simplify the notations. Since we will be interested in the endomorphism algebra of $M^{\mathfrak{p}}(\Lambda)\otimes V^{\otimes n}$, we start by understanding the decomposition of this module.

\begin{prop}
  \label{prop:tensor_MV}
  For any $\mu\in \mathcal{P}_{\mathfrak{l}}^n$, there is an isomorphism of $\mathcal{U}_q(\mathfrak{gl}_{m})$-modules
  \[
    M^{\mathfrak{p}}(\Lambda,\mu)\otimes V \simeq \bigoplus_{\nu\in\mathcal{P}_{\mathfrak{l}}^{n+1}}M^{\mathfrak{p}}(\Lambda,\nu),
  \]
  where the sum is over all $\nu\in\mathcal{P}_{\mathfrak{l}}^{n+1}$ whose Young diagram is obtained from the Young diagram of $\mu$ by adding one addable box.
\end{prop}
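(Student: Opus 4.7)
\emph{Plan.}
First I would rewrite $M^{\mathfrak{p}}(\Lambda,\mu)\otimes V$ as an induced module via the tensor identity (Hopf-algebraic projection formula). Since $\mathcal{U}_q(\mathfrak{p})$ is a Hopf subalgebra of $\mathcal{U}_q(\mathfrak{gl}_m)$, there is a natural isomorphism of $\mathcal{U}_q(\mathfrak{gl}_m)$-modules
\[
  M^{\mathfrak{p}}(\Lambda,\mu)\otimes V \;\simeq\; \mathcal{U}_q(\mathfrak{gl}_m)\otimes_{\mathcal{U}_q(\mathfrak{p})}\bigl(V^{\mathfrak{l}}(\Lambda,\mu)\otimes V|_{\mathcal{U}_q(\mathfrak{p})}\bigr).
\]
Combined with the PBW-based freeness of $\mathcal{U}_q(\mathfrak{gl}_m)$ as a right $\mathcal{U}_q(\mathfrak{p})$-module (so induction is exact), this reduces the problem to understanding $V^{\mathfrak{l}}(\Lambda,\mu)\otimes V$ as a $\mathcal{U}_q(\mathfrak{p})$-module and then inducing term by term.

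\smallskip

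Second, I would analyze $V|_{\mathcal{U}_q(\mathfrak{p})}$ through the flag $V_k := \mathrm{span}(v_1,\ldots,v_{\tilde m_k})$. Each $V_k$ is a $\mathcal{U}_q(\mathfrak{p})$-submodule, and the successive quotient $V_k/V_{k-1}$ is annihilated by $E_i$ for $i\in I$ (the only vector of $V_k$ that $E_{\tilde m_{k-1}}$ moves nontrivially is $v_{\tilde m_{k-1}+1}$, whose image lies in $V_{k-1}$). Hence $V_k/V_{k-1}$ descends to a $\mathcal{U}_q(\mathfrak{l})$-module, where it is the standard representation of the $k$-th Levi factor $\mathcal{U}_q(\mathfrak{gl}_{m_k})$ (the other Levi factors acting via the counit). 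Tensoring with $V^{\mathfrak{l}}(\Lambda,\mu)=\bigotimes_i V^{\mathfrak{gl}_{m_i}}(\Lambda_{\mu^{(i)}})$ and applying the quantum Pieri rule in the $k$-th factor gives
\[
  V^{\mathfrak{l}}(\Lambda,\mu)\otimes (V_k/V_{k-1}) \;\simeq\; \bigoplus_{\nu}V^{\mathfrak{l}}(\Lambda,\nu),
\]
the sum running over $\nu\in\mathcal{P}^{n+1}_{\underline{m}}$ obtained from $\mu$ by adding an addable box in the $k$-th component (the length bound $l(\nu^{(k)})\leq m_k$ is exactly the Pieri dominance condition inside $\mathfrak{gl}_{m_k}$, whose standard representation only sees the weights $\varepsilon_{\tilde m_{k-1}+1},\ldots,\varepsilon_{\tilde m_k}$). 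Inducing back to $\mathcal{U}_q(\mathfrak{gl}_m)$ produces a filtration of $M^{\mathfrak{p}}(\Lambda,\mu)\otimes V$ whose subquotients are $\bigoplus_\nu M^{\mathfrak{p}}(\Lambda,\nu)$, exhausting the right-hand side of the claim.

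\smallskip

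The remaining step, which I expect to be the main obstacle, is to show this filtration splits. Each $M^{\mathfrak{p}}(\Lambda,\nu)$ is simple by the preceding lemma. The key observation is that the algebraic independence of the parameters $\beta_1,\ldots,\beta_d$ forces the central characters of the $M^{\mathfrak{p}}(\Lambda,\nu)$ to be pairwise distinct: a dot-action coincidence $w\cdot(\Lambda_\nu+\rho)=\Lambda_{\nu'}+\rho$ would, by matching the coefficient of each $\beta_i$, force $w$ to lie in the Levi Weyl group, and then the remaining integer part would impose a non-trivial polynomial relation in the $\beta_i$'s. Hence $\Ext^1$ between distinct summands vanishes and the filtration splits. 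A more hands-on alternative would be, for each admissible $\nu$, to construct an explicit $\mathcal{U}_q(\mathfrak{gl}_m)$-singular vector of weight $\Lambda_\nu$ in $M^{\mathfrak{p}}(\Lambda,\mu)\otimes V$ (starting from the $\mathcal{U}_q(\mathfrak{l})$-singular vectors supplied by the Pieri decomposition and correcting them to be killed by all $E_i$, $i\in I$); the simple submodules they generate then sum directly by the distinct-highest-weights argument in a weight module, and a character identity finishes the proof.
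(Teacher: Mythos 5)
Your proof is correct and follows essentially the same route as the paper: tensor identity, exactness of parabolic induction, the Levi branching rule (you make explicit the $\mathcal{U}_q(\mathfrak{p})$-stable flag $V_0\subset V_1\subset\cdots\subset V_d=V$ on the standard module, a step the paper compresses into the phrase ``the usual branching rule''), and splitting of the resulting filtration by showing the $M^{\mathfrak{p}}(\Lambda,\nu)$ lie in pairwise distinct blocks of category $\mathcal{O}$. One small imprecision at the end: once comparison of the $\beta_i$-coefficients forces $w$ into the Levi Weyl group, the conclusion $\nu=\nu'$ comes from the fact that each $\mathfrak{S}_{m_i}$-dot-orbit contains at most one dominant weight (both $\nu^{(i)}$ and $\nu'^{(i)}$ are dominant), not from any further polynomial relation among the $\beta_i$'s --- exactly as the paper argues.
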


\begin{proof}
  We start by showing that $M^{\mathfrak{p}}(\Lambda,\mu)\otimes V$ has a filtration given by the $M^{\mathfrak{p}}(\Lambda,\nu)$ as in the statement. First, we have the following tensor identity:
  \[
    (\mathcal{U}_q(\mathfrak{gl}_m)\otimes_{\mathcal{U}_q(\mathfrak{p})} V^{\mathfrak{l}}(\Lambda,\mu)) \otimes V \simeq \mathcal{U}_q(\mathfrak{gl}_m)\otimes_{\mathcal{U}_q(\mathfrak{p})}(V^{\mathfrak{l}}(\Lambda,\mu) \otimes V).
  \]
  Noticing that $L\mapsto \mathcal{U}_q(\mathfrak{gl}_m)\otimes_{\mathcal{U}_q(\mathfrak{p})} L$ is an exact functor from the category of finite dimensional $\mathcal{U}_q(\mathfrak{p})$-modules to the category of $\mathcal{U}_q(\mathfrak{gl}_m)$-modules, it remains to show that
  \[
    V^{\mathfrak{l}}(\Lambda,\mu) \otimes V \simeq \bigoplus_{\nu\in\mathcal{P}_{\mathfrak{l}}^{n+1}}V^{\mathfrak{l}}(\Lambda,\nu),
  \]
  where the sum is over all $\nu\in\mathcal{P}_{\mathfrak{l}}^{n+1}$ whose Young diagram is obtained from the Young diagram of $\mu$ by adding one addable box. This follows from the usual branching rule for $\mathcal{U}_q(\mathfrak{gl}_{m_i})$-modules.

To show that the sum is direct, we use arguments from the infinite-dimensional representation theory of Lie algebras. We consider the usual category $\mathcal{O}$ for $\mathcal{U}_q(\mathfrak{gl}_m)$ \cite[Chapter 4]{mazorchuk}. We then show that each $M^{\mathfrak{p}}(\Lambda,\nu)$ lie in a different block of the category $\mathcal{O}$, which then implies that the sum is direct.

  First, as $M^{\mathfrak{p}}(\Lambda,\nu)$ is a quotient of the universal Verma module $M^{\mathfrak{b}}(\Lambda_\nu)$, these two modules share the same central character. Therefore $M^{\mathfrak{p}}(\Lambda,\nu)$ and $^{\mathfrak{p}}(\Lambda,\nu')$ are in the same block if and only if the central characters afforded by $M^{\mathfrak{b}}(\Lambda_\nu)$ and $M^{\mathfrak{b}}(\Lambda_{\nu'})$ are the same. But these central characters are equal if and only if $\Lambda_\nu$ and $\Lambda_{\nu'}$ are in the same orbit for the dot action of the symmetric group, which is the usual action of the symmetric group shifted by the sum of simple roots $\rho$.  

  We obtain that $M^{\mathfrak{p}}(\Lambda,\nu)$ and $M^{\mathfrak{p}}(\Lambda,\nu')$ are in the same block if and only if there exists $w\in\mathfrak{S}_m$ such that
  \[
    w\cdot \Lambda_\nu = \Lambda_{\nu'}.
  \]

  Now, suppose that $M^{\mathfrak{p}}(\Lambda,\nu)$ and $M^{\mathfrak{p}}(\Lambda,\nu')$ are in the same block. Since the dot action satisfies $w\cdot (\eta+\gamma) = w\cdot \eta + w(\gamma)$, we deduce that $w(\Lambda) = \Lambda$ so that $w$ lies in $\mathfrak{S}_{m_1}\times \cdots \times \mathfrak{S}_{m_d}$. Then, writing $w=(w_1,\ldots,w_d)$, we find that $w_i\cdot \nu^{(i)} = \nu'^{(i)}$ for every $1\leq i \leq d$. Since both $\nu^{(i)}$ and $\nu'^{(i)}$ are dominant weights, we deduce that $\nu^{(i)}=\nu'^{(i)}$ for every $1 \leq i \leq d$. Indeed, each orbit for the dot action contains a unique dominant weight.

  Hence if $\nu\neq \nu'$, the parabolic Verma modules $M^{\mathfrak{p}}(\Lambda,\nu)$ and $M^{\mathfrak{p}}(\Lambda,\nu')$ are in different blocks of the category $\mathcal{O}$.
\end{proof}

Using the previous proposition and induction, one shows the following corollary.

\begin{cor}
  \label{cor:dec_tensor}
  There is an isomorphism
  \[
    M^{\mathfrak{p}}(\Lambda)\otimes V^{\otimes n} \simeq \bigoplus_{\mu\in \mathcal{P}^{n}_{\mathfrak{l}}}M(\Lambda,\mu)^{n_\mu},
  \]
  where $n_\mu$ is the number of paths from the empty $d$-partition to $\mu$ in the Young lattice of $d$-multipartitions.
\end{cor}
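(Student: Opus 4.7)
The plan is to prove this by induction on $n$, with \cref{prop:tensor_MV} doing essentially all of the work.

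For the base case $n=0$, we have $M^{\mathfrak{p}}(\Lambda)\otimes V^{\otimes 0} = M^{\mathfrak{p}}(\Lambda) = M^{\mathfrak{p}}(\Lambda,\emptyset)$, and the only path from $\emptyset$ to $\emptyset$ is the trivial one, so $n_{\emptyset}=1$ and the formula holds. For the inductive step, assume the decomposition is known for $n$. Using the isomorphism $M^{\mathfrak{p}}(\Lambda)\otimes V^{\otimes (n+1)}\simeq(M^{\mathfrak{p}}(\Lambda)\otimes V^{\otimes n})\otimes V$, the induction hypothesis, and \cref{prop:tensor_MV} applied to each summand, I obtain
\[
M^{\mathfrak{p}}(\Lambda)\otimes V^{\otimes (n+1)}
\simeq \bigoplus_{\mu\in \mathcal{P}^{n}_{\mathfrak{l}}}\bigoplus_{\substack{\nu\in \mathcal{P}^{n+1}_{\mathfrak{l}}\\ [\nu]\supset[\mu]}} M^{\mathfrak{p}}(\Lambda,\nu)^{n_\mu},
\]
where the inner sum is over those $\nu$ whose Young diagram is obtained from that of $\mu$ by adding one addable box.

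The final step is to interchange the order of summation and regroup by $\nu$. The multiplicity of $M^{\mathfrak{p}}(\Lambda,\nu)$ becomes $\sum_{\mu} n_\mu$, where $\mu$ ranges over the $d$-partitions of $n$ from which $\nu$ is obtained by adding one box (equivalently, obtained from $\nu$ by removing a removable box). By definition of the Young lattice, any path from $\emptyset$ to $\nu$ decomposes uniquely as a path from $\emptyset$ to such a $\mu$ followed by the edge $\mu \to \nu$, so this sum equals $n_\nu$. This yields the desired decomposition.

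There is no real obstacle: the only thing one must be careful about is that the decomposition in \cref{prop:tensor_MV} is a genuine direct sum (rather than a filtration), which is exactly the content of the block argument in the proof of that proposition, and once available the combinatorial regrouping is purely formal.
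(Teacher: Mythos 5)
Your proof is correct and follows exactly the approach the paper has in mind: the paper simply states that the corollary follows from \cref{prop:tensor_MV} and induction, and you have supplied precisely that induction (base case $n=0$, inductive step via the proposition, then regrouping the double sum using the fact that paths to $\nu$ decompose through its predecessors). Nothing is missing.
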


\subsection{Braiding and an action of the Artin-Tits group of type $B$}

The quantized enveloping algebra (or rather a completion of the tensor product with itself) contains an element, called the quasi-$R$-matrix, which is a crucial tool in defining a braiding on a subcategory of the $\mathcal{U}_q(\mathfrak{gl}_{m})$-modules. Since there are several possible braidings, we make our choice explicit and refer to \cite[10.1.D]{chari-pressley} for more details. 

In a completion of $\mathcal{U}_q(\mathfrak{gl}_m)\otimes \mathcal{U}_q(\mathfrak{gl}_m)$, we define an element $\Theta$ by
\[
  \Theta=\prod_{\alpha\in\Phi^+} \left(\sum_{n=0}^{+\infty}q^{\frac{n(n-1)}{2}}\frac{(q-q^{-1})^n}{[n]!} E_\alpha^{n}\otimes F_\alpha^{n}\right),
\]
where $[n]!=\prod_{i=1}^n\frac{q^i-q^{-i}}{q-q^{-1}}$ and $E_\alpha,F_\alpha$ being the root vectors associated to a positive root $\alpha$. If $M$ and $N$ are two $\mathcal{U}_q(\mathfrak{gl}_{m})$ type $1$  weight modules over the ground ring $\mathbb{Q}(q,\lambda_1,\ldots,\lambda_{d-1})$ where $\mathcal{U}_q(\mathfrak{gl}_{m})^{>0}$ act locally nilpotently, $\Theta$ induces an isomorphism of vector spaces $\Theta_{M,N}\colon M\otimes N \rightarrow M \otimes N$. We then define a morphism of $\mathcal{U}_q(\mathfrak{gl}_m)$-modules
\[
  c_{M,N}\colon M\otimes N \rightarrow N \otimes M,
\]
by
\[
c_{M,N} = \tau \circ f \circ \Theta_{M,N}, \]
where $\tau$ is the flip $v\otimes w \mapsto w\otimes v$ and $f$ is the map $v\otimes w \mapsto q^{\langle\mu,\nu\rangle}v\otimes w$ if $v$ and $w$ are of respective weights $\mu$ and $\nu$. This endows the category of type $1$ weight modules on which $\mathcal{U}_q(\mathfrak{gl}_{m})^{>0}$ acts locally nilpotently with a braiding. In particular, we have the hexagon equation:
\[
  c_{L\otimes M,N} = (c_{L,N}\otimes\id_M)\circ(\id_L\otimes c_{M,N})
  \quad\text{and}\quad
  c_{L,M\otimes N} = (\id_{M}\otimes c_{L,N})\circ(c_{L,M}\otimes \id_{N}).
\]

Let $\mathcal{B}_n$ be the Artin-Tits braid group of type $B_n$. It has the following presentation in terms of generators and relations:
\[
  \mathcal{B}_n = \left\langle
    \tau_0,\tau_1,\ldots,\tau_{n-1}\middle\vert
    \begin{array}{ll}
      \tau_0\tau_1\tau_0\tau_1=\tau_1\tau_0\tau_1\tau_0,&\\
      \tau_i\tau_j=\tau_j\tau_i,&\text{if}\ \lvert i-j \rvert > 1,\\
      \tau_i\tau_{i+1}\tau_i = \tau_{i+1}\tau_i\tau_{i+1},& \text{for}\ 1 \leq i \leq n-2
  \end{array}
  \right\rangle.
\]

Using the braiding, we define the following endomorphisms of $M\otimes N^{\otimes n}$:
\begin{align*}
  R_0 &= (c_{N,M} \circ c_{M,N})\otimes\id_{N^{\otimes n-1}},\\
  R_i &= \id_{M\otimes N^{\otimes i-1}}\otimes c_{N,N} \otimes \id_{N^{\otimes n-i-1}}, \ \text{for} \ 1\leq i \leq n-1.
\end{align*}

Pictorially, one can represent these endomorphisms as
\[
  R_0=\tikzdiagh{-0.7ex}{
    \draw (1,-1) .. controls (1,-0.5) and (0, -0.5) .. (0,0);
    \draw[line width = 2mm,white] (0,-1) .. controls (0,-0.5) and (1, -0.5) .. (1,0);
    \draw (0,-1) .. controls (0,-0.5) and (1, -0.5) .. (1,0);
    \draw (0,-1) .. controls (0,-1.5) and (1, -1.5) .. (1,-2) node[below]{\tiny $N$};
    \draw[line width = 2mm,white] (0,-2) .. controls (0,-1.5) and (1, -1.5) .. (1,-1);
    \draw (0,-2) node[below]{\tiny $M$} .. controls (0,-1.5) and (1, -1.5) .. (1,-1);
    \draw (2,0) -- (2,-2) node[below]{\tiny $N$};
    \node at(2.5,-1) {\tiny$\dots$};
    \draw (3,0) -- (3,-2) node[below]{\tiny $N$};
  }
  \quad\text{and}\quad
  R_i=\tikzdiagh{0.8ex}{
    \draw (0,0) -- (0,-2) node[below]{\tiny $M$};
    \draw (1,0) -- (1,-2) node[below]{\tiny $N$};
    \node at(1.5,-1) {\tiny$\dots$};
    \draw (2,0) .. controls (2,-1) and (3,-1) .. (3,-2) node[below]{\tiny $N$};;
    \draw[line width = 2mm,white] (3,0) .. controls (3,-1) and (2,-1) .. (2,-2);
    \draw (3,0) .. controls (3,-1) and (2,-1) .. (2,-2) node[below]{\tiny $N$};;
    \node at(3.5,-1) {\tiny$\dots$};
    \draw (4,0) -- (4,-2) node[below]{\tiny $N$};
    \draw[decoration={brace,mirror,raise=-8pt},decorate]  (0.85,-2.75) -- node {$i$} (2.15,-2.75)
  }   
\]

\begin{prop} \label{prop:usefulprop}
  The assignment $\tau_i\mapsto R_i$ defines an action of $\mathcal{B}_n$ on the module $M\otimes N^{\otimes n}$ which commutes with the $\mathcal{U}_q(\mathfrak{gl}_{m})$ action.
\end{prop}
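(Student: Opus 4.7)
The proposition splits into two independent claims: the $R_i$ commute with the $\mathcal{U}_q(\mathfrak{gl}_m)$-action, and they satisfy the defining relations of $\mathcal{B}_n$. The first claim will be immediate from the construction: each $c_{X,Y}$ is by definition a morphism of $\mathcal{U}_q(\mathfrak{gl}_m)$-modules, so every $R_i$, being a tensor product of such morphisms with identities (and a composition of two such in the case of $R_0$), is itself a $\mathcal{U}_q(\mathfrak{gl}_m)$-module morphism. I will therefore concentrate on the braid relations.

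Of the three families of defining relations of $\mathcal{B}_n$, two are straightforward. The distant commutations $R_iR_j = R_jR_i$ for $|i-j|>1$, including the cases $R_0R_j = R_jR_0$ for $j>1$, are immediate since the operators act on disjoint tensor factors. The type $A$ braid relations $R_iR_{i+1}R_i = R_{i+1}R_iR_{i+1}$ for $1 \leq i \leq n-2$ reduce to the Yang--Baxter equation for $c_{N,N}$, which is a standard consequence of the hexagon axioms applied to $(N,N,N)$.

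The heart of the proof is the type $B$ relation $R_0R_1R_0R_1 = R_1R_0R_1R_0$, and my plan is to derive it by identifying the composite $R_1R_0R_1$ with the double braiding of the composite object $M\otimes N$ with the third tensor factor. Concretely, the two hexagon identities
\[
  c_{M\otimes N, N} = (c_{M,N}\otimes\id_N)\circ(\id_M\otimes c_{N,N}),
  \quad
  c_{N, M\otimes N} = (\id_M\otimes c_{N,N})\circ(c_{N,M}\otimes\id_N)
\]
telescope, upon composing $c_{N, M\otimes N}\circ c_{M\otimes N, N}$, to exactly $R_1 R_0 R_1$ on the first three tensor factors, so that $R_1R_0R_1$ equals the double braiding $\sigma_{M\otimes N, N}$. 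Once this is in hand, $R_0$ restricted to the first three factors is $\sigma_{M,N}\otimes\id_N$, which has the form $f\otimes\id_N$ for $f$ an endomorphism of the composite object $X = M\otimes N$. A double application of the naturality of $c$ shows that $\sigma_{X,N}$ commutes with any $f\otimes\id_N$ of this form, which yields the desired relation on the first three strands; the remaining strands are not touched by either $R_0$ or $R_1$.

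The main obstacle I anticipate is precisely the type $B$ relation: both sides involve four composite crossings, and no direct manipulation of the quasi-$R$-matrices $\Theta_{M,N}$, $\Theta_{N,N}$ and their associated weight factors will simplify them in any useful way. The conceptual leap is to recognize $R_1R_0R_1$ as a monodromy of a larger composite object; once made, the relation reduces to a routine naturality argument.
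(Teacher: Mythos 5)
Your proof is correct but takes a genuinely different route from the paper. The paper's proof is one line for each claim: the $\mathcal{U}_q(\mathfrak{gl}_m)$-equivariance follows by definition (you say the same), and the braid relations follow by citing the embedding of the type $B_n$ braid group into the type $A_{n+1}$ braid group from~\cite[Lemma~2.1]{ILZ}, i.e.\ one views $M$ as an extra strand of the $(n+1)$-strand braid groupoid so that $\tau_0\mapsto\sigma_1^2$, $\tau_i\mapsto\sigma_{i+1}$ identifies $\mathcal{B}_n$ with a subgroup, and the usual type $A$ $R$-matrix relations give everything. You instead verify the defining relations of $\mathcal{B}_n$ directly. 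The commuting and type $A$ braid relations are handled the same way on both sides. For the type $B$ relation your key observation is the telescoping identity $c_{N,M\otimes N}\circ c_{M\otimes N,N}=R_1R_0R_1$ (on the first three factors), obtained from the two hexagon equations, together with the naturality of the braiding which makes the double braiding $\sigma_{M\otimes N,N}$ commute with $\sigma_{M,N}\otimes\id_N$; this gives $[R_1R_0R_1,R_0]=0$, i.e.\ the type $B$ relation. This is a clean, self-contained, purely categorical argument: it buys independence from the external reference and makes transparent why the relation holds (it is a naturality statement). The paper's route is shorter and conceptually identical at heart — the embedding $\tau_0\mapsto\sigma_1^2$ is precisely the statement that the degree-zero strand can be absorbed into a single composite object — but it outsources the computation.
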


\begin{proof}
  The fact that $R_i$ is a $\mathcal{U}_q(\mathfrak{gl}_{m})$-morphism follows by definition of $R_i$. The fact that the defining relations of $\mathcal{B}_n$ are satisfied follows from the embedding of the braid group of type $B_n$ into the braid group of type $A_{n+1}$ \cite[Lemma 2.1]{ILZ}. 
\end{proof}

Finally, we end this section with a lemma due to Drinfeld \cite[Proposition 5.1 and Remark 4) below]{drinfeld_almost} computing the action of the double braiding on highest weight modules, which is related with the action of the ribbon element.

\begin{lem}
  \label{lem:scalar_double}
  Let $L, M$ and $N$ be highest weight modules of respective highest weight $\lambda,\mu$ and $\nu$ such that $L \subset M\otimes N$. Then the double braiding $c_{N,M}\circ c_{M,N}$ restricted to $N$ acts by multiplication by the scalar
  \[
    q^{\langle \lambda,\lambda+2\rho\rangle - \langle\mu,\mu+2\rho\rangle - \langle\nu,\nu+2\rho\rangle}.
  \]
\end{lem}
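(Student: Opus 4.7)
The plan is to reduce the statement to a well-known computation involving the ribbon element of $\mathcal{U}_q(\mathfrak{gl}_m)$.

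First, I would observe that since $c_{N,M}\circ c_{M,N}$ is a $\mathcal{U}_q(\mathfrak{gl}_m)$-module endomorphism of $M\otimes N$, it restricts to an endomorphism of the submodule $L$. Because $L$ is a highest weight module generated by a vector $v_\lambda$ of weight $\lambda$, and the $\lambda$-weight space of $L$ is one-dimensional, any such endomorphism must send $v_\lambda$ to a scalar multiple of itself; since $v_\lambda$ generates $L$, the double braiding acts by that same scalar on all of $L$. So the only task is to compute this scalar.

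To that end, I would invoke the Drinfeld ribbon element: there is a central element $\theta$ in (a suitable completion of) $\mathcal{U}_q(\mathfrak{gl}_m)$ satisfying
\[
\Delta(\theta) \;=\; (\theta\otimes\theta)\cdot (c_{N,M}\circ c_{M,N})^{-1}
\]
on any tensor product $M\otimes N$ of modules on which the braiding is defined. A standard computation with the quantum Casimir shows that $\theta$ acts on any highest weight module of highest weight $\eta$ by the scalar $q^{-\langle\eta,\eta+2\rho\rangle}$. Applying this to $M$, $N$, and $L$, and rearranging the displayed identity, immediately yields
\[
(c_{N,M}\circ c_{M,N})\big\vert_L \;=\; q^{\langle\lambda,\lambda+2\rho\rangle-\langle\mu,\mu+2\rho\rangle-\langle\nu,\nu+2\rho\rangle},
\]
as claimed.

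The main subtlety to address is that the ribbon element a priori lives in a completion, so one has to check that it actually acts by a well-defined operator on the (possibly infinite-dimensional) parabolic Verma modules appearing later. This follows from the local nilpotence of $\mathcal{U}_q(\mathfrak{gl}_m)^{>0}$ on the type $1$ weight modules under consideration, which is precisely the condition already used to define the braiding. As a sanity check, when $L = M\otimes N$ with $\lambda=\mu+\nu$, one may verify the formula by a direct computation on $v_\mu\otimes v_\nu$: using that $\Theta$ fixes $v_\mu\otimes v_\nu$ (because $E_\alpha v_\mu = 0$ for every positive root $\alpha$), one finds that $c_{N,M}\circ c_{M,N}$ multiplies $v_\mu\otimes v_\nu$ by $q^{2\langle\mu,\nu\rangle}$, which agrees with the stated scalar thanks to the identity $\langle\mu+\nu,\mu+\nu+2\rho\rangle - \langle\mu,\mu+2\rho\rangle - \langle\nu,\nu+2\rho\rangle = 2\langle\mu,\nu\rangle$.
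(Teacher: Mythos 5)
Your proof is correct. The paper does not give a proof of this lemma at all: it simply cites it as a result of Drinfeld and remarks that it is ``related with the action of the ribbon element''; your argument via the identity $\Delta(\theta)=(\theta\otimes\theta)\,(c_{N,M}\circ c_{M,N})^{-1}$ together with the eigenvalue $q^{-\langle\eta,\eta+2\rho\rangle}$ of $\theta$ on a highest weight module of weight $\eta$ is exactly the standard ribbon-element proof that the paper's citation is alluding to, and your sanity check on the highest weight vector of $M\otimes N$ is consistent with the paper's explicit formula $c_{M,N}=\tau\circ f\circ\Theta_{M,N}$.
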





\section{The endomorphism algebra of $M^{\mathfrak{p}}(\Lambda)\otimes V^{\otimes n}$}
\label{sec:endomorphism}

The aim of this section is to prove the main result of this paper. We first explain why $M^{\mathfrak{p}}(\Lambda)\otimes V^{\otimes n}$ inherits an action of the Ariki--Koike algebra from the action of the braid group of type $B_n$. It is a classical result that the eigenvalues of $R_i$ are $q$ and $-q^{-1}$: the action of the braiding on $V\otimes V$ is
\[
  v_i\otimes v_j \mapsto
  \begin{cases}
    q v_j\otimes v_i & \ \text{if}\ i=j,\\
    v_j\otimes v_i & \ \text{if}\ i>j,\\
    v_j\otimes v_i +(q-q^{-1})v_i\otimes v_j & \ \text{if}\ i< j.
  \end{cases}
\]
Moreover, using \cref{lem:scalar_double}, we easily compute the eigenvalues of the endomorphism $R_0$ in order to show that the action of $\mathcal{B}_n$ factors through the Ariki--Koike algebra.

\begin{lem}
  \label{lem:eignevalues}
  The eigenvalues $u_1,\ldots,u_d$ of $R_0$ on $M^{\mathfrak{p}}(\Lambda)\otimes V$ are equal to
  \[
    u_i=(\lambda_iq^{-\tilde{m}_{i-1}})^2.
  \]
\end{lem}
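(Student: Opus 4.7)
The plan is to diagonalize $R_0$ by decomposing $M^{\mathfrak{p}}(\Lambda)\otimes V$ into its simple isotypic components and then invoke Drinfeld's formula (\cref{lem:scalar_double}) to compute the scalar by which the double braiding acts on each component.

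First, I would apply \cref{prop:tensor_MV} to the empty $d$-partition, whose addable boxes are exactly the $d$ boxes $(1,1,i)$ for $1\leq i\leq d$. This gives a decomposition
\[
  M^{\mathfrak{p}}(\Lambda)\otimes V \simeq \bigoplus_{i=1}^{d} M^{\mathfrak{p}}(\Lambda,\mu_i),
\]
where $\mu_i$ denotes the $d$-partition with a single box in the $i$-th component. The highest weight of $M^{\mathfrak{p}}(\Lambda,\mu_i)$ is then $\Lambda+\varepsilon_{\tilde{m}_{i-1}+1}$. Since $R_0$ is an endomorphism of $\mathcal{U}_q(\mathfrak{gl}_m)$-modules (\cref{prop:usefulprop}) and the summands $M^{\mathfrak{p}}(\Lambda,\mu_i)$ are pairwise non-isomorphic simple modules, Schur's lemma forces $R_0$ to act by a scalar $u_i$ on each summand.

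Second, I would compute $u_i$ using \cref{lem:scalar_double} with $L=M^{\mathfrak{p}}(\Lambda,\mu_i)\subset M^{\mathfrak{p}}(\Lambda)\otimes V$. Setting $\lambda=\Lambda+\varepsilon_{\tilde{m}_{i-1}+1}$, $\mu=\Lambda$, and $\nu=\varepsilon_1$, the exponent simplifies to
\[
  \langle\lambda,\lambda+2\rho\rangle-\langle\mu,\mu+2\rho\rangle-\langle\nu,\nu+2\rho\rangle
  =2\langle\Lambda,\varepsilon_{\tilde{m}_{i-1}+1}\rangle+2\langle\varepsilon_{\tilde{m}_{i-1}+1}-\varepsilon_1,\rho\rangle.
\]
Now $\langle\Lambda,\varepsilon_{\tilde{m}_{i-1}+1}\rangle=\beta_i$ by definition of $\Lambda$, and using the standard formula $\langle\varepsilon_k,\rho\rangle=(m-2k+1)/2$, I get $2\langle\varepsilon_{\tilde{m}_{i-1}+1}-\varepsilon_1,\rho\rangle=-2\tilde{m}_{i-1}$. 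Hence $u_i=q^{2\beta_i-2\tilde{m}_{i-1}}=(\lambda_i q^{-\tilde{m}_{i-1}})^2$, as claimed.

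The only mild subtlety will be a careful accounting of the $\rho$-shift together with the contribution of the quadratic term $\langle\varepsilon_{\tilde{m}_{i-1}+1},\varepsilon_{\tilde{m}_{i-1}+1}\rangle-\langle\varepsilon_1,\varepsilon_1\rangle=0$; everything else is a direct substitution. It is also worth pointing out that, since the $\lambda_i$ are algebraically independent, the eigenvalues $u_1,\ldots,u_d$ are automatically pairwise distinct, so the minimal polynomial of $R_0$ indeed has degree $d$ as required for the cyclotomic relation of $\mathcal{H}(d,n)$.
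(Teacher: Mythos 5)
Your proof is correct and follows essentially the same route as the paper's: decompose $M^{\mathfrak{p}}(\Lambda)\otimes V$ into $\bigoplus_i M^{\mathfrak{p}}(\Lambda,\mu_i)$ via \cref{prop:tensor_MV} and then read off each scalar from \cref{lem:scalar_double} with $\lambda=\Lambda+\varepsilon_{\tilde m_{i-1}+1}$, $\mu=\Lambda$, $\nu=\varepsilon_1$. The only cosmetic differences are that you explicitly invoke Schur's lemma (redundant, since \cref{lem:scalar_double} already yields scalar action on each summand) and spell out the $\rho$-pairing, whereas the paper states the final identity $=2(\beta_i-\tilde m_{i-1})$ without intermediate steps.
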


\begin{proof}
  Let $\Lambda$ be the highest weight of $M^{\mathfrak{p}}(\Lambda)$. The decomposition of $M^{\mathfrak{p}}(\Lambda)\otimes V$ is given in \cref{prop:tensor_MV}:
  \[
    M^{\mathfrak{p}}(\Lambda)\otimes V \simeq \bigoplus_{i=1}^{d}M^{\mathfrak{p}}(\Lambda,\mu_i),
  \]
  where $\mu_i$ is the $d$-partition of $1$ whose only non-zero component is the $i$-th one and is equal to $(1)$. The highest weight of $M^{\mathfrak{p}}(\Lambda,\mu_i)$ being $\Lambda+\varepsilon_{\tilde{m}_{i-1}+1}$, the action of $R_0$ on $M^{\mathfrak{p}}(\Lambda,\mu_i)$ is given by
  \[
    q^{\langle\Lambda+\varepsilon_{\tilde{m}_{i-1}+1},\Lambda+\varepsilon_{\tilde{m}_{i-1}+1}+2\rho\rangle-\langle\Lambda,\Lambda+2\rho\rangle-\langle\varepsilon_1,\varepsilon_1+2\rho\rangle},
  \]
  and we check that
  \[
    \langle\Lambda+\varepsilon_{\tilde{m}_{i-1}+1},\Lambda+\varepsilon_{\tilde{m}_{i-1}+1}+2\rho\rangle-\langle\Lambda,\Lambda+2\rho\rangle-\langle\varepsilon_1,\varepsilon_1+2\rho\rangle = 2(\beta_{i}-\tilde{m}_{i-1}).
  \]
\end{proof}

By the definition of the Ariki–Koike algebra, Proposition~\ref{prop:usefulprop} and the previous lemma we thus get an action of the Ariki–Koike algebra for the parameters $u_i=(\lambda_iq^{-\tilde{m}_{i-1}})^2$ on $M^{\mathfrak{p}}(\Lambda)\otimes V^{\otimes n}$. Therefore, the assignment $T_i\mapsto R_i$ defines a morphism of algebras
\[
  \mathcal{H}(d,n) \rightarrow \End_{\mathcal{U}_q(\mathfrak{gl}_m)}(M^{\mathfrak{p}}(\Lambda)\otimes V^{\otimes n}).
\]

\begin{thm}
  \label{thm:main_result}\leavevmode
  \begin{itemize}
  \item The algebra morphism $\mathcal{H}(d,n)\rightarrow \End_{\mathcal{U}_{q}(\mathfrak{gl}_m)}( M^{\mathfrak{p}}(\Lambda)\otimes V^{\otimes n})$ is surjective and factors through an isomorphism
    \[
      \mathcal{H}_{\underline{m}}(d,n)\overset{\simeq}{\longrightarrow} \End_{\mathcal{U}_{q}(\mathfrak{gl}_m)}( M^{\mathfrak{p}}(\Lambda)\otimes V^{\otimes n}).
    \]
  \item There is an isomorphism of $\mathcal{H}(d,n)\otimes\mathcal{U}_q(\mathfrak{gl}_m)$-module
    \[
      M^{\mathfrak{p}}(\Lambda)\otimes V^{\otimes n} \simeq \bigoplus_{\mu\in \mathcal{P}^{n}_{\underline{m}}}V_{\mu}\otimes M^{\mathfrak{p}}(\Lambda,\mu).
    \]
  \end{itemize}
\end{thm}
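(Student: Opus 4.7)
The plan is to extract everything from the $\mathcal{U}_q(\mathfrak{gl}_m)$-decomposition already provided by \cref{cor:dec_tensor}. That corollary gives
\begin{equation*}
  M^{\mathfrak{p}}(\Lambda)\otimes V^{\otimes n} \simeq \bigoplus_{\mu\in \mathcal{P}^{n}_{\underline{m}}} M^{\mathfrak{p}}(\Lambda,\mu)^{n_\mu},
\end{equation*}
where $n_\mu$ is the number of standard tableaux of shape $\mu$. Since each $M^{\mathfrak{p}}(\Lambda,\mu)$ is simple, has a one-dimensional highest weight space, and (by the block argument in the proof of \cref{prop:tensor_MV}) distinct $M^{\mathfrak{p}}(\Lambda,\mu)$ lie in distinct blocks of $\mathcal{O}$, Schur's lemma yields
\begin{equation*}
  \End_{\mathcal{U}_q(\mathfrak{gl}_m)}\bigl(M^{\mathfrak{p}}(\Lambda)\otimes V^{\otimes n}\bigr)\;\simeq\;\prod_{\mu\in\mathcal{P}^n_{\underline{m}}}\End_{\Bbbk}(U_\mu),
\end{equation*}
where $U_\mu=\Hom_{\mathcal{U}_q(\mathfrak{gl}_m)}(M^{\mathfrak{p}}(\Lambda,\mu),M^{\mathfrak{p}}(\Lambda)\otimes V^{\otimes n})$ is an $n_\mu$-dimensional vector space. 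Since the $\mathcal{H}(d,n)$-action commutes with the quantum group action (\cref{prop:usefulprop}), each $U_\mu$ is naturally a $\mathcal{H}(d,n)$-module. The crux is to identify $U_\mu$ with the simple module $V_\mu$ of \cref{prop:AK_rep}.

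To make this identification, I will use Jucys--Murphy eigenvectors. A standard tableau $\mathfrak{t}$ of shape $\mu$ corresponds to a path $\emptyset\to\mu_1\to\cdots\to\mu_n=\mu$ in the Young lattice; at each step $k$ one selects the (one-dimensional) highest weight subspace of the distinguished summand $M^{\mathfrak{p}}(\Lambda,\mu_k)\hookrightarrow M^{\mathfrak{p}}(\Lambda,\mu_{k-1})\otimes V$ inside $M^{\mathfrak{p}}(\Lambda)\otimes V^{\otimes k}$ (well-defined by the block argument and iterated application of \cref{prop:tensor_MV}). The Jucys--Murphy element $X_k = T_{k-1}\cdots T_1 T_0 T_1\cdots T_{k-1}$ acts on this line as the double braiding between $M^{\mathfrak{p}}(\Lambda,\mu_{k-1})$ and the $k$-th factor $V$, so \cref{lem:scalar_double} gives the eigenvalue
\begin{equation*}
  q^{\langle\Lambda_{\mu_k},\Lambda_{\mu_k}+2\rho\rangle-\langle\Lambda_{\mu_{k-1}},\Lambda_{\mu_{k-1}}+2\rho\rangle-\langle\varepsilon_1,\varepsilon_1+2\rho\rangle}.
\end{equation*}
Writing $\Lambda_{\mu_k}-\Lambda_{\mu_{k-1}}=\varepsilon_{\tilde{m}_{c-1}+a}$ for the added box $(a,b,c)$ and expanding, this reduces (after a computation in the spirit of \cref{lem:eignevalues}) to $u_c q^{2(b-a)}$, matching formula~\eqref{eq:JM_eigen} exactly. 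By the criterion recalled after \cref{prop:AK_rep}, the $\mathcal{H}(d,n)$-submodule of $U_\mu$ generated by this vector is isomorphic to $V_\mu$; since $\dim U_\mu = n_\mu = \dim V_\mu$, we obtain $U_\mu\simeq V_\mu$. This immediately yields the $\mathcal{H}(d,n)\otimes\mathcal{U}_q(\mathfrak{gl}_m)$-module decomposition of part (iii) of Theorem~A.

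For part (ii), the image of $\mathcal{H}(d,n)$ inside $\prod_\mu \End_\Bbbk(U_\mu)$ acts on each factor through the simple module $V_\mu$, so by the Jacobson density theorem it surjects onto each $\End_\Bbbk(V_\mu)$; pairwise non-isomorphism of the $V_\mu$ and semisimplicity then force the image to be the whole product $\prod_{\mu\in\mathcal{P}^n_{\underline{m}}}\End_\Bbbk(V_\mu)$, which is precisely $\mathcal{H}_{\underline{m}}(d,n)$ by definition. The main obstacle will be the eigenvalue bookkeeping in the second paragraph: one must carefully track $\Lambda_{\mu_k}$, combine it with the explicit form of $\rho$, and verify that the inductive choice of highest weight vector is canonical (which hinges on the multiplicity-free branching in \cref{prop:tensor_MV}).
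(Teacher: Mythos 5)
Your proposal is correct and follows essentially the same route as the paper's own proof: both deduce the decomposition of the endomorphism algebra from \cref{cor:dec_tensor}, identify the multiplicity space of $M^{\mathfrak{p}}(\Lambda,\mu)$ with the simple $\mathcal{H}(d,n)$-module $V_\mu$ by constructing a Jucys--Murphy eigenvector along a path in the Young lattice and invoking \cref{lem:scalar_double}, then obtain the first claim from the definition of the row quotient. The only cosmetic difference is that you package the multiplicity space as $U_\mu=\Hom_{\mathcal{U}_q(\mathfrak{gl}_m)}(M^{\mathfrak{p}}(\Lambda,\mu),-)$ and spell out the double-centralizer step with Jacobson density, whereas the paper writes the decomposition as $\bigoplus_\mu \tilde{V}_\mu\otimes M^{\mathfrak{p}}(\Lambda,\mu)$ and treats the surjectivity as immediate from the definition.
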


\begin{proof}
 The first part of the theorem follows immediately from the second part and the definition of the row-quotient $\mathcal{H}_{\underline{m}}(d,n)$.
  
  Using \cref{cor:dec_tensor} and the fact that $\mathcal{H}(d,n)$ acts on $M^{\mathfrak{p}}(\Lambda)\otimes V^{\otimes n}$ by $\mathcal{U}_q(\mathfrak{gl}_m)$-linear endomorphisms, we see that
  \[
    M^{\mathfrak{p}}(\Lambda)\otimes V^{\otimes n} \simeq \bigoplus_{\mu\in \mathcal{P}^{n}_{\mathfrak{l}}}\tilde{V}_{\mu}\otimes M^{\mathfrak{p}}(\Lambda,\mu),
  \]
  for some $\mathcal{H}(d,n)$-modules $\tilde{V}_{\mu}$. Since the multiplicity of $M^{\mathfrak{p}}(\Lambda,\mu)$ in $M^{\mathfrak{p}}(\Lambda)\otimes V^{\otimes n}$ is given by the number of paths in the Young lattice from the empty $d$-partition to the $d$-partition $\mu$, we have $\dim(\tilde{V}_{\mu}) = \dim(V_{\mu})$. Showing that $V_{\mu}$ is a submodule of $\tilde{V}_{\mu}$ will end the proof of the second part of the theorem.

  Let $\mathfrak{t}$ be a standard Young tableau of shape $\mu$ and denote by $(a_i,b_i,c_i)=\mathfrak{t}^{-1}(i)$.
  Denote by $\mu[i]$ the $d$-partition of $i$ obtained by adding the boxes labeled by $1$ to $i$ in the chosen standard tableau $\mathfrak{t}$ to the empty $d$-partition. We now choose a highest weight vector $v\in M^{\mathfrak{p}}(\Lambda)\otimes V^{\otimes n}$ of weight $\Lambda_\mu$ such that for all $1\leq i \leq n$ we have
  \[
    v\in M^{\mathfrak{p}}(\Lambda,\mu[i])\otimes V^{\otimes (n-i)} \subset M^{\mathfrak{p}}(\Lambda)\otimes V^{\otimes n}.
  \]
 Using the branching rule, one see that such a vector exists and is unique up to a scalar. Let us show that this vector $v$ is a common eigenvector of the Jucys--Murphy elements $X_i$. It is easy to see that the action of the Jucys--Murphy element $X_i$ on $M^{\mathfrak{p}}(\Lambda)\otimes V^{\otimes n}$ is given by the double braiding $(c_{V,M^{\mathfrak{p}}(\Lambda)\otimes V^{\otimes (i-1)}}\circ c_{M^{\mathfrak{p}}(\Lambda)\otimes V^{\otimes (i-1)},V})\otimes \id_{V^{\otimes (n-i)}}$. By \cref{lem:scalar_double}, we obtain that $X_i$ acts on $v$ by multiplication by
  \[
    q^{\langle \Lambda_{\mu[i]},\Lambda_{\mu[i]}+2\rho\rangle - \langle\Lambda_{\mu[i-1]},\Lambda_{\mu[i-1]}+2\rho\rangle - \langle \varepsilon_1,\varepsilon_1+2\rho\rangle}.
  \]
 Indeed, $v$ lies in the summand $M^{\mathfrak{p}}(\Lambda,\mu[i])\otimes V^{\otimes (n-i)} \subset M^{\mathfrak{p}}(\Lambda,\mu[i-1])\otimes V\otimes V^{\otimes (n-i)}$ of $M^{\mathfrak{p}}(\Lambda)\otimes V^{\otimes n}$. But $\Lambda_{\mu[i]} = \Lambda_{\mu[i-1]} + \varepsilon_{k_i}$ where $k_i = \tilde{m}_{c_{i-1}}+a_i$ so that
  \begin{align*}
    \langle \Lambda_{\mu[i]},\Lambda_{\mu[i]}+2\rho\rangle - \langle\Lambda_{\mu[i-1]},\Lambda_{\mu[i-1]}+2\rho\rangle - \langle\varepsilon_1,\varepsilon_1+2\rho\rangle &= 2\langle \Lambda_{\mu[i-1]},\varepsilon_{k_i}\rangle + 2(1-k_i)\\
    &= 2(\beta_{c_i} + b_i-k_i),
  \end{align*}
  since the component of $\Lambda_{\mu[i-1]}$ on $\varepsilon_{k_i}$ is $\beta_{c_i} + (b_i-1)$. Therefore, $X_i$ acts on $v$ by multiplication by
  \[
    (\lambda_{c_i}q^{b_i-k_i})^2=u_{c_i}q^{2(b_i-a_i)}.
  \]
  Therefore, the $\mathcal{H}(d,n)$ submodule spanned by $v$ is isomorphic to $V_{\mu}$ and then $V_{\mu}$ is a submodule of $\tilde{V}_{\mu}$. 
\end{proof}

\subsection{Some particular cases}

We finish by giving some special cases of \cref{thm:main_result} in order to recover various well-known algebras. The two first special cases involve the well-known situation without a parabolic Verma module: it suffices to note that, if $\mathfrak{p}=\mathfrak{gl}_m$, then $M^{\mathfrak{p}}(\Lambda)$ is the trivial module.

\begin{cor}\label{cor:HeckeA}
  If the parabolic subalgebra $\mathfrak{p}$ is $\mathfrak{gl}_m$ and $m\geq n$, then the endomorphism algebra of $M^{\mathfrak{p}}(\Lambda)\otimes V^{\otimes n}$ is isomorphic to Hecke algebra of type $A$.
\end{cor}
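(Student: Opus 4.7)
The plan is to specialize \cref{thm:main_result} to the case $d=1$, which is precisely the case where $\mathfrak{p}=\mathfrak{gl}_m$ (so the Levi $\mathfrak{l}$ is all of $\mathfrak{gl}_m$ and $\underline{m}=(m)$). In this situation, $M^{\mathfrak{p}}(\Lambda)=V^{\mathfrak{l}}(\Lambda,0)$ is the one-dimensional ``determinant-type'' character of highest weight $\Lambda=\beta_1(\varepsilon_1+\cdots+\varepsilon_m)$. Since any one-dimensional module is invertible for the tensor product, tensoring with $M^{\mathfrak{p}}(\Lambda)$ is an auto-equivalence of the category of $\mathcal{U}_q(\mathfrak{gl}_m)$-modules, and in particular induces an isomorphism $\End_{\mathcal{U}_q(\mathfrak{gl}_m)}(M^{\mathfrak{p}}(\Lambda)\otimes V^{\otimes n})\cong\End_{\mathcal{U}_q(\mathfrak{gl}_m)}(V^{\otimes n})$.

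Next I would identify the row quotient explicitly. With $\underline{m}=(m)$, the set $\mathcal{P}^n_{(m)}$ consists of all partitions of $n$ of length at most $m$; since $m\geq n$, every partition of $n$ has length at most $n\leq m$, so $\mathcal{P}^n_{(m)}$ is the full set of partitions of $n$. By \cref{rem:asymptotic_AK} this gives $\mathcal{H}_{(m)}(1,n)\cong\mathcal{H}(1,n)$. Then I would observe that the Ariki--Koike algebra $\mathcal{H}(1,n)$ is nothing but the type-$A_{n-1}$ Hecke algebra: the cyclotomic relation reduces to $T_0-u_1=0$, forcing $T_0$ to act as the scalar $u_1$, after which the mixed braid relation $T_0T_1T_0T_1=T_1T_0T_1T_0$ becomes automatic, and the algebra collapses onto the subalgebra generated by $T_1,\ldots,T_{n-1}$ subject only to the Hecke relations of type $A$.

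Combining these three points with \cref{thm:main_result} yields the desired isomorphism. There is no real obstacle in this specialization; the only step worth checking is that the semisimplicity hypothesis implicit in \cref{thm:main_result} holds, but Ariki's criterion is trivially satisfied when $d=1$, since the product $\prod_{1\leq i<j\leq d}(q^{2l}u_i-u_j)$ is empty and $q$ is treated as a generic parameter so that $[i]_{q^2}\neq 0$ for all $1\leq i\leq n$.
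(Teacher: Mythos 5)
Your proof is correct, and its primary route differs slightly from the paper's. The paper's own justification is the one-liner in the paragraph preceding the corollary: for $\mathfrak{p}=\mathfrak{gl}_m$ the module $M^{\mathfrak{p}}(\Lambda)$ is one-dimensional, so $\End_{\mathcal{U}_q(\mathfrak{gl}_m)}(M^{\mathfrak{p}}(\Lambda)\otimes V^{\otimes n})\cong\End_{\mathcal{U}_q(\mathfrak{gl}_m)}(V^{\otimes n})$, and the statement reduces to the classical quantum Schur--Weyl duality for $\mathcal{U}_q(\mathfrak{gl}_m)$ with $m\geq n$. You record this same observation, but your main chain of reasoning is instead: apply \cref{thm:main_result} to get $\End_{\mathcal{U}_q(\mathfrak{gl}_m)}(M^{\mathfrak{p}}(\Lambda)\otimes V^{\otimes n})\cong\mathcal{H}_{(m)}(1,n)$, invoke \cref{rem:asymptotic_AK} together with $m\geq n$ to identify $\mathcal{H}_{(m)}(1,n)$ with $\mathcal{H}(1,n)$, and then observe that the $d=1$ Ariki--Koike algebra collapses onto the type-$A$ Hecke algebra because the cyclotomic relation forces $T_0=u_1$. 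This is more self-contained, since it derives the corollary entirely from the paper's own machinery rather than citing classical Schur--Weyl as a black box, and your check of Ariki's semisimplicity criterion for $d=1$ fills in a hypothesis the paper leaves implicit. On the other hand, once \cref{thm:main_result} is invoked the one-dimensionality observation becomes logically redundant in your argument. One small precision you get right where the paper is loose: for generic $\beta_1$ the module $M^{\mathfrak{p}}(\Lambda)$ is a nontrivial one-dimensional character (a ``determinant power''), not literally the trivial module, though this of course makes no difference to the endomorphism algebra.
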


\begin{cor}\label{cor:TLtypeA}
  If the parabolic subalgebra $\mathfrak{p}$ is $\mathfrak{gl}_m$ and $m=2$, then the endomorphism algebra of $M^{\mathfrak{p}}(\Lambda)\otimes V^{\otimes n}$ is isomorphic to Temperley--Lieb algebra of type $A$.
\end{cor}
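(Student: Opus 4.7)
The plan is to deduce this from the main theorem (\cref{thm:main_result}) specialized to the case $\mathfrak{p}=\mathfrak{gl}_m$ with $m=2$, and then identify the resulting row-quotient of the type-$A$ Hecke algebra with the Temperley--Lieb algebra.

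First I would observe that when $\mathfrak{p}=\mathfrak{gl}_m$ we are in the degenerate case $d=1$, $m_1=m$, for which the Levi factor coincides with $\mathfrak{gl}_m$ itself and the parabolic Verma module $M^{\mathfrak{p}}(\Lambda)$ reduces to the one-dimensional $\mathcal{U}_q(\mathfrak{gl}_m)$-module on which each $L_i$ acts as a scalar and $E_i,F_i$ act by zero. Since tensoring with a one-dimensional module is an auto-equivalence of the category of $\mathcal{U}_q(\mathfrak{gl}_m)$-modules (its quasi-inverse being tensoring with the dual character), we obtain a canonical identification
\[
\End_{\mathcal{U}_q(\mathfrak{gl}_m)}\bigl(M^{\mathfrak{p}}(\Lambda)\otimes V^{\otimes n}\bigr)\;\simeq\;\End_{\mathcal{U}_q(\mathfrak{gl}_m)}\bigl(V^{\otimes n}\bigr).
\]

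Next I would invoke \cref{thm:main_result}: in the case $d=1$ the Ariki--Koike algebra $\mathcal{H}(1,n)$ reduces to the type-$A$ Hecke algebra (the cyclotomic relation becomes $T_0=u_1$, a scalar), and the theorem identifies the endomorphism algebra with the row-quotient $\mathcal{H}_{(m)}(1,n)$, i.e.\ with the quotient of the type-$A$ Hecke algebra by the kernel of the projection onto $\prod_{\mu}\End_{\Bbbk}(V_{\mu})$, the product being over partitions of $n$ with at most $m$ rows.

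Taking $m=2$, the surviving simple modules are exactly those indexed by partitions of $n$ with at most two rows. It remains to match this quotient with the Temperley--Lieb algebra $\mathrm{TL}_n$: this is a classical identification going back to Jimbo, since in the semisimple case $\mathrm{TL}_n$ is precisely the direct product of matrix algebras over the simple modules indexed by two-row partitions (equivalently, it is the quotient of the Hecke algebra by the two-sided ideal generated by any two-sided Kazhdan--Lusztig cell ideal killing the three-row partitions, and the dimensions match the Catalan number $C_n$). There is no real obstacle in the argument; the only ingredient not yet present in the paper is the classical description of $\mathrm{TL}_n$ as this semisimple quotient of the Hecke algebra, which I would simply cite.
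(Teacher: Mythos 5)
Your proof is correct and takes essentially the same approach as the paper, which also handles the $\mathfrak{p}=\mathfrak{gl}_m$ cases by noting that $M^{\mathfrak{p}}(\Lambda)$ is one-dimensional (the paper says ``trivial,'' but your observation that any one-dimensional module is harmless, since tensoring by it is an auto-equivalence, is slightly more careful). The extra step you spell out---identifying the row-quotient $\mathcal{H}_{(2)}(1,n)$ from \cref{thm:main_result} with the Temperley--Lieb algebra via the two-row partitions---is precisely the classical identification the paper implicitly invokes as the ``well-known situation without a parabolic Verma module.''
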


We now turn to special cases where $\mathfrak{p}$ is a strict subalgebra of $\mathfrak{gl}_m$. The following corollary follows from~\cref{rem:asymptotic_AK}.

\begin{cor}\label{cor:AK}
For $\mathfrak{p}$ such that $m\geq nd$ and $m_i\geq n$ for all $1\leq i \leq d$, the endomorphism algebra of $M^{\mathfrak{p}}(\Lambda)\otimes V^{\otimes n}$ is isomorphic to the Ariki--Koike  algebra $\mathcal{H}(d,n)$.
\end{cor}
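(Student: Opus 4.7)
The plan is to deduce this directly as a corollary from Theorem~\ref{thm:main_result} and Remark~\ref{rem:asymptotic_AK}, with essentially no new work required. First I would verify that the hypothesis $m_i \geq n$ for all $1 \leq i \leq d$ forces $\mathcal{P}^n_{\underline{m}}$ to coincide with the full set of $d$-partitions of $n$: any $d$-partition $\mu=(\mu^{(1)},\ldots,\mu^{(d)})$ of $n$ has each component $\mu^{(i)}$ a partition of some integer $\leq n$, so $l(\mu^{(i)}) \leq n \leq m_i$ automatically. The condition $m \geq nd$ is then also automatic from $m = m_1 + \cdots + m_d$, so it need not be used as an extra constraint.

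Given this, the defining surjection
\[
  \mathcal{H}(d,n) \twoheadrightarrow \prod_{\mu\in \mathcal{P}^n_{\underline{m}}}\End_{\Bbbk}(V_\mu)
\]
ranges over \emph{all} $d$-partitions of $n$, and the semisimplicity assumption on $\mathcal{H}(d,n)$ (automatic here since the parameters $\lambda_1,\ldots,\lambda_d$ are algebraically independent, so Ariki's criterion is trivially satisfied) makes it an isomorphism. Hence $\mathcal{H}_{\underline{m}}(d,n) \simeq \mathcal{H}(d,n)$, which is exactly the content of Remark~\ref{rem:asymptotic_AK}.

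Combining with the isomorphism $\mathcal{H}_{\underline{m}}(d,n) \overset{\simeq}{\longrightarrow} \End_{\mathcal{U}_q(\mathfrak{gl}_m)}(M^{\mathfrak{p}}(\Lambda)\otimes V^{\otimes n})$ provided by Theorem~\ref{thm:main_result} yields the desired identification with $\mathcal{H}(d,n)$. There is no real obstacle in this proof; the work was already done in establishing Theorem~\ref{thm:main_result}, and the corollary is only a matter of unpacking the row-quotient definition in the asymptotic regime $m_i \geq n$.
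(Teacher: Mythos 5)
Your proof is correct and follows exactly the paper's route: unwind the definition of $\mathcal{H}_{\underline{m}}(d,n)$ via Remark~\ref{rem:asymptotic_AK} (which shows $\mathcal{H}_{\underline{m}}(d,n)\simeq\mathcal{H}(d,n)$ when $m_i\geq n$) and then apply Theorem~\ref{thm:main_result}. Your side observation that $m\geq nd$ is already implied by $m_i\geq n$ for all $i$ is also accurate.
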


The Hecke algebra of type $B$ with unequal parameters appears when we work with a standard parabolic subalgebra $\mathfrak{p}$ with Levi factor $\mathfrak{gl}_{m_1}\times\mathfrak{gl}_{m_2}$.

\begin{cor}\label{cor:HeckeB}
  If the parabolic subalgebra $\mathfrak{p}$ is such that $d=2$, $m_1\geq n$ and $m_2\geq n$, then the endomorphism algebra of $M^{\mathfrak{p}}(\Lambda)\otimes V^{\otimes n}$ is isomorphic to the Hecke algebra of type $B$ with unequal and algebraically independent parameters.
\end{cor}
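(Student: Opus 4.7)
The plan is to deduce the corollary directly from \cref{thm:main_result} combined with \cref{rem:asymptotic_AK} and \cref{lem:eignevalues}. By \cref{thm:main_result}, the endomorphism algebra $\End_{\mathcal{U}_{q}(\mathfrak{gl}_m)}(M^{\mathfrak{p}}(\Lambda)\otimes V^{\otimes n})$ is isomorphic to the row quotient $\mathcal{H}_{\underline{m}}(2,n)$, where $\underline{m}=(m_1,m_2)$. Under the hypothesis $m_1\geq n$ and $m_2\geq n$, every $2$-partition $\mu$ of $n$ satisfies $l(\mu^{(i)})\leq n\leq m_i$, so $\mathcal{P}^n_{\underline{m}}$ coincides with the set of all $2$-partitions of $n$. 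Hence \cref{rem:asymptotic_AK} applies and yields $\mathcal{H}_{\underline{m}}(2,n)\simeq \mathcal{H}(2,n)$.

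It then remains to identify $\mathcal{H}(2,n)$ with the Hecke algebra of type $B_n$ with the stated parameters. Comparing the defining presentation of $\mathcal{H}(d,n)$ recalled in \cref{sec:def_AK} with the presentation of the type-$B$ Hecke algebra from \cite[Example 5.2.2,~(c)]{geck-jacon}, one sees that, for $d=2$, the cyclotomic relation $(T_0-u_1)(T_0-u_2)=0$ is precisely the quadratic relation on the extra generator, while the remaining relations match verbatim. The explicit values of the parameters are supplied by \cref{lem:eignevalues}, namely $u_1=\lambda_1^2$ and $u_2=(\lambda_2 q^{-m_1})^2$, together with the parameter $q$ for the generators $T_1,\dots,T_{n-1}$. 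Since $\lambda_1$ and $\lambda_2$ are algebraically independent indeterminates over $\mathbb{Q}(q)$, so are $u_1$ and $u_2$, which finishes the identification. I do not anticipate any genuine obstacle here, the argument being essentially a specialization of the previous general results and a matching of presentations.
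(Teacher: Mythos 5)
Your proof is correct and follows the same route the paper intends: apply \cref{thm:main_result}, use $m_1,m_2\geq n$ together with \cref{rem:asymptotic_AK} to identify the row quotient with the full Ariki--Koike algebra $\mathcal{H}(2,n)$, and then recognize $\mathcal{H}(2,n)$ (for $d=2$) as the type-$B$ Hecke algebra with parameters read off from \cref{lem:eignevalues}. The paper leaves this corollary without an explicit proof, but the surrounding discussion (including the remark preceding \cref{cor:AK}) makes it clear that your specialization argument is exactly the intended one.
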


Finally, the last special case is a generalization of the $\mathfrak{gl}_2$ case of \cite{ILZ}, where we recover the generalized blob algebra. 

\begin{cor}\label{cor:genblob}
  If the parabolic subalgebra $\mathfrak{p}$ is the standard Borel subalgebra $\mathfrak{b}$ of $\mathfrak{gl}_m$, that is $d=m$ and $m_i=1$ for $1\leq i \leq d$, then the endomorphism algebra of $M(\Lambda)\otimes V^{\otimes n}$ is isomorphic to the generalized blob algebra $\mathcal{B}(d,n)$.
\end{cor}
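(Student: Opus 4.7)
The plan is to observe that this corollary is an immediate specialization of the main theorem, \cref{thm:main_result}, to the case $\mathfrak{p}=\mathfrak{b}$. Indeed, \cref{thm:main_result} provides an isomorphism
\[
\mathcal{H}_{\underline{m}}(d,n) \xrightarrow{\ \simeq\ } \End_{\mathcal{U}_q(\mathfrak{gl}_m)}\bigl(M^{\mathfrak{p}}(\Lambda) \otimes V^{\otimes n}\bigr)
\]
for any standard parabolic $\mathfrak{p}$ with Levi factor $\mathfrak{gl}_{m_1}\times\cdots\times\mathfrak{gl}_{m_d}$, where $\underline{m}=(m_1,\dots,m_d)$. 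So the only thing to check is that when $\mathfrak{p}=\mathfrak{b}$, the row quotient $\mathcal{H}_{\underline{m}}(d,n)$ coincides with the generalized blob algebra $\mathcal{B}(d,n)$.

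First I would unpack the hypothesis: if $\mathfrak{p}=\mathfrak{b}$ is the standard Borel subalgebra of upper-triangular matrices, then the Levi factor is the full Cartan, i.e.\ $\mathfrak{l}=\mathfrak{gl}_1\times\cdots\times\mathfrak{gl}_1$ with $d$ factors and $d=m$. Hence $m_i=1$ for every $1\leq i\leq d$, and $\underline{m}=(1,\dots,1)$. The set $\mathcal{P}^n_{\underline{m}}$ is then the set of $d$-partitions of $n$ all of whose components have length at most $1$, i.e.\ single-row partitions in each component.

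Next, I would invoke the definition of $\mathcal{B}(d,n)$ recalled in the subsection on generalized blob algebras: the row quotient $\mathcal{H}_{(1,\dots,1)}(d,n)$ is, by definition, the generalized blob algebra $\mathcal{B}(d,n)$ of Martin--Woodcock. Substituting $\underline{m}=(1,\dots,1)$ into the isomorphism from \cref{thm:main_result} therefore yields
\[
\mathcal{B}(d,n) \xrightarrow{\ \simeq\ } \End_{\mathcal{U}_q(\mathfrak{gl}_m)}\bigl(M^{\mathfrak{b}}(\Lambda)\otimes V^{\otimes n}\bigr),
\]
which is precisely the claim of the corollary. There is no genuine obstacle here — the corollary is a formal consequence of \cref{thm:main_result} together with the definition of $\mathcal{B}(d,n)$ as $\mathcal{H}_{(1,\dots,1)}(d,n)$; the only point requiring a sentence of verification is identifying the correct Levi datum associated with $\mathfrak{p}=\mathfrak{b}$.
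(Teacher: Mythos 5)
Your argument is correct and is essentially the same as the paper's: the corollary is a direct specialization of \cref{thm:main_result} to $\underline{m}=(1,\dots,1)$, combined with the paper's definition of $\mathcal{B}(d,n)$ as the row quotient $\mathcal{H}_{(1,\dots,1)}(d,n)$. The paper gives no separate proof for this corollary, treating it exactly as the formal consequence you describe.
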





\section{Some remarks on the non-semisimple case}
\label{sec:non-semisimple}

This paper deals with the semisimple case, where the decomposition of $M^{\frak{p}}(\Lambda)\otimes V^{\otimes n}$ as the sum of simple modules is a crucial tool to compute its endomorphism algebra. Non-semisimple situations appear if $q$ is no longer an indeterminate in the base field $\Bbbk$ but a root of unity. If $q$ and the parameters $\lambda_1,\ldots,\lambda_d$ appearing in the highest weight of $M^{\frak{p}}(\Lambda)$ are no longer algebraically independent, a non-semisimple situation may also appear. Indeed, the parabolic Verma module might not be simple anymore as it is readily seen from the case of $\mathfrak{gl}_2$. It is then natural to ask whether it is possible to extend the Schur--Weyl duality to the non-semisimple case. Let us remark that if $q$ is not a root of unity and if $\lambda_i\lambda_j^{-1}\not\in\mathbb{Z}$ for all $1 \leq i,j \leq d$ then the behavior is similar to the one described in the previous sections.

In order to define the action, we use an ``integral version'' of the algebras $\mathcal{U}_q(\mathfrak{gl}_n)$ and $\mathcal{H}(d,n)$ and of the module $M^{\frak{p}}(\Lambda)\otimes V^{\otimes n}$, compatible with the specialization at a root of unity.

We start with the Ariki--Koike algebra. The definition given in \cref{sec:def_AK} is valid for any field $\Bbbk$ and any choice of parameters. Concerning the algebra $\mathcal{U}_q(\mathfrak{gl}_n)$, we consider Lusztig's integral from $\mathcal{U}_q^{\mathrm{res}}(\mathfrak{gl}_n)$ over $\mathbb{Z}[q,q^{-1}]$, see \cite[Section 9.3]{chari-pressley}. It is also known that the quasi-$R$-matrix $\Theta$ is an element of (a completion of) $\mathcal{U}_q^{\mathrm{res}}(\mathfrak{gl}_n)\otimes \mathcal{U}_q^{\mathrm{res}}(\mathfrak{gl}_n)$. Then for a base field $\Bbbk$ and any $\xi\in \Bbbk^*$, the quantum group $\mathcal{U}_{\xi}(\mathfrak{gl}_n)$ is defined as $\Bbbk\otimes_{\mathbb{Z}[q,q^{-1}]} \mathcal{U}_q^{\mathrm{res}}(\mathfrak{gl}_n)$, where we see $\Bbbk$ as a $\mathbb{Z}[q,q^{-1}]$-module via the morphism sending $q$ to $\xi$.

The parabolic Verma module $M^{\mathfrak{p}}(\Lambda)$ is a highest weight module and we choose $v_\Lambda$ a highest weight vector. We then have at our disposal an integral version, which is the submodule generated over $\mathcal{U}_q^{\mathrm{res}}(\mathfrak{gl}_n)$ by the highest weight $v_\Lambda$. Its specialization at $q=\xi$ will still be denoted $M^{\mathfrak{p}}(\Lambda)$. Similarly, we have a version at $q=\xi$ of the standard module $V$, which has a well-known integral form.

Since the quasi-$R$-matrix $\Theta$ lies in the Lusztig's integral form of the quantum group, we can similarly use the braiding to define the endomorphisms $R_0,R_1,\ldots,R_{n-1}$ of the $\mathcal{U}_{\xi}(\mathfrak{gl}_n)$-module $M^{\mathfrak{p}}(\Lambda)\otimes V^{\otimes n}$. As in the semisimple case, we have:

\begin{prop}
  Let $\Bbbk$ be a field, $q\in \Bbbk^*$ and $\lambda_1,\ldots,\lambda_d \in \Bbbk$. Then the assignment $T_i\mapsto R_i$ is a morphism of algebras from $\mathcal{H}(d,n)$ to $\End_{\mathcal{U}_{\xi}(\mathfrak{gl}_n)}(M^{\mathfrak{p}}(\Lambda)\otimes V^{\otimes n})$. The parameters $u_i$ of the Ariki--Koike algebra are still given by \cref{lem:eignevalues}.
\end{prop}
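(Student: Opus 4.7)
The plan is to deduce the proposition from the semisimple case by a specialization argument, using that every object in sight admits an integral form over the ring $A=\mathbb{Z}[q^{\pm 1},\lambda_1^{\pm 1},\ldots,\lambda_d^{\pm 1}]$. The point is that each Ariki--Koike relation, once phrased as a polynomial identity in the operators $R_i$ acting on the integral form, is defined over $A$ and can be checked after extending scalars to the fraction field $K=\mathrm{Frac}(A)$, where \cref{thm:main_result} already applies.

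First I would set up the integral forms. The integral form of $V$ is the obvious $A$-lattice with basis $v_1,\ldots,v_m$, and the integral form $M^{\mathfrak{p}}(\Lambda)_A$ is the $\mathcal{U}_q^{\mathrm{res}}(\mathfrak{gl}_m)$-submodule generated by the highest weight vector $v_\Lambda$. By a PBW-type argument for parabolic Verma modules, $M^{\mathfrak{p}}(\Lambda)_A$ is a free $A$-module, hence so is $M^{\mathfrak{p}}(\Lambda)_A\otimes V_A^{\otimes n}$. Since the quasi-$R$-matrix $\Theta$ lies in a completion of $\mathcal{U}_q^{\mathrm{res}}(\mathfrak{gl}_m)\otimes\mathcal{U}_q^{\mathrm{res}}(\mathfrak{gl}_m)$, the endomorphisms $R_0,\ldots,R_{n-1}$ are well-defined $A$-linear endomorphisms of this module, and each commutes with the $\mathcal{U}_q^{\mathrm{res}}$-action since braiding morphisms are module maps by construction. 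All of this survives base change along the specialization $A\to\Bbbk$ sending $q\mapsto\xi$ and $\lambda_i$ to its chosen value, so the only remaining task is to verify the defining relations of $\mathcal{H}(d,n)$ over $A$.

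The braid relations (the type~$A$ relations among $R_1,\ldots,R_{n-1}$, the commutation $R_0R_i=R_iR_0$ for $i>1$, and the type~$B$ relation $R_0R_1R_0R_1=R_1R_0R_1R_0$) follow from the hexagon identities for the braiding on the monoidal category of $\mathcal{U}_q^{\mathrm{res}}$-modules, exactly as in \cref{prop:usefulprop}; that argument is purely diagrammatic and insensitive to semisimplicity. The quadratic relation $(R_i-q)(R_i+q^{-1})=0$ for $i\geq 1$ is a direct $2\times 2$ computation on $V\otimes V$ using the explicit formulas displayed at the start of \cref{sec:endomorphism}, giving the characteristic polynomial $(X-q)(X+q^{-1})$ on each two-dimensional block; this identity takes place in $\mathbb{Z}[q^{\pm 1}]$ and specializes.

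The main step is the cyclotomic relation $\prod_{i=1}^d(R_0-u_i)=0$ with $u_i=(\lambda_iq^{-\tilde{m}_{i-1}})^2$. Over $K$, \cref{prop:tensor_MV} gives the decomposition $M^{\mathfrak{p}}(\Lambda)_K\otimes V_K=\bigoplus_{i=1}^d M^{\mathfrak{p}}(\Lambda,\mu_i)$, and \cref{lem:scalar_double}, evaluated as in the proof of \cref{lem:eignevalues}, shows that $R_0$ acts on the $i$-th summand by the scalar $u_i$. Hence $\prod_i(R_0-u_i)$ annihilates $M^{\mathfrak{p}}(\Lambda)_K\otimes V_K$, and therefore also $M^{\mathfrak{p}}(\Lambda)_K\otimes V_K^{\otimes n}$ since $R_0$ acts only on the first two tensor factors. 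Because $M^{\mathfrak{p}}(\Lambda)_A\otimes V_A^{\otimes n}$ is $A$-free, and hence embeds into its extension of scalars to $K$, the identity $\prod_i(R_0-u_i)=0$ already holds as an endomorphism over $A$; specializing yields the cyclotomic relation over $\Bbbk$. The same argument applied to the formal computation of \cref{lem:eignevalues} (which is a polynomial identity over $A$) confirms that the parameters are still given by $u_i=(\lambda_iq^{-\tilde{m}_{i-1}})^2$ after specialization. The principal obstacle is the PBW-freeness of $M^{\mathfrak{p}}(\Lambda)_A$ needed to promote the generic identity to an integral one; this is where care is required, but for parabolic Vermas over Lusztig's integral form it is standard.
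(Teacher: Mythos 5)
Your argument is correct and is essentially the one the paper has in mind. The paper does not spell out a proof for this proposition; it leans on the preceding integral-form discussion and the phrase ``as in the semisimple case.'' What you have done is make that phrase precise, which is genuinely needed here: the braid and quadratic relations are insensitive to semisimplicity (the hexagon identities hold over any ground ring, and the quadratic relation on $V\otimes V$ is a $\mathbb{Z}[q^{\pm1}]$-identity), but the cyclotomic relation in the semisimple case was deduced from the block decomposition $M^{\mathfrak p}(\Lambda)\otimes V\simeq\bigoplus_i M^{\mathfrak p}(\Lambda,\mu_i)$, which may fail after specialization. Your specialization step — verify $\prod_i(R_0-u_i)=0$ over $K=\mathrm{Frac}(A)$ and descend via $A$-freeness of $M^{\mathfrak p}(\Lambda)_A\otimes V_A^{\otimes n}$ — is exactly the right way to transfer this to arbitrary $\Bbbk$, and the $A$-freeness does hold here because $V^{\mathfrak l}(\Lambda,0)$ is one-dimensional, so $M^{\mathfrak p}(\Lambda)_A\cong\mathcal{U}_q^{\mathrm{res}}(\mathfrak u^-)\cdot v_\Lambda$ and the PBW basis applies. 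One small point worth flagging: your base ring $A=\mathbb{Z}[q^{\pm1},\lambda_1^{\pm1},\ldots,\lambda_d^{\pm1}]$ forces the $\lambda_i$ to specialize to units of $\Bbbk$; the proposition's hypothesis only says $\lambda_i\in\Bbbk$, but the invertibility is in fact required (the $L_k^{\pm1}$ act on $v_\Lambda$ by $\lambda_i^{\pm1}$), so this is an implicit assumption in the paper's statement rather than a gap in your proof.
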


It is more difficult to understand the image of map $\mathcal{H}(d,n)\rightarrow\End_{\mathcal{U}_{\xi}(\mathfrak{gl}_n)}(M^{\mathfrak{p}}(\Lambda)\otimes V^{\otimes n})$, or even better to describe the image and the kernel of the map.
In~\cite{ILZ} Iohara, Lehrer and Zhang studied the particular case of $\mathfrak{gl}_2$ and $\mathfrak{p}=\mathfrak{b}$ (this corresponds to $n=2$ and $d=2$) and proved that if $q$ is an indeterminate in $\Bbbk$ and that $\lambda_1\lambda_2^{-1}=q^{l}$ for $l\in \mathbb{Z}$, $l\geq -1$, then the map $\mathcal{H}(d,n)\rightarrow\End_{\mathcal{U}_{q}(\mathfrak{gl}_n)}(M^{\mathfrak{p}}(\Lambda)\otimes V^{\otimes n})$ is surjective~\cite[Proposition 5.11]{ILZ}.


In order to extend the Schur--Weyl duality form the semisimple case to a non-semisimple case, a classical strategy \cite{doty,andersen_stroppel_tubbenhauer} is to argue that the dimensions of the various algebras, such as $\End_{\mathcal{U}_{\xi}(\mathfrak{gl}_n)}(M^{\mathfrak{p}}(\Lambda)\otimes V^{\otimes n})$ or $\mathcal{H}(d,n)$, are independent of the base field $\Bbbk$.

Following the arguments of \cite{andersen_stroppel_tubbenhauer}, a first step would be to determine whether the parabolic Verma module $M^{\mathfrak{p}}(\Lambda)$ is tilting in an appropriate category $\mathcal{O}$ of infinite dimensional $\mathcal{U}_q(\mathfrak{gl}_n)$-modules. Since $V$ is tilting and the tensor product of tilting modules is tilting, having $M^{\mathfrak{p}}(\Lambda)$ being tilting would mean that $M^{\mathfrak{p}}(\Lambda)\otimes V^{\otimes n}$ is. Since the space of endomorphisms of a tilting module is flat, its dimension does not depend on the base field $\Bbbk$.

Concerning $\mathcal{H}(d,n)$, its definition is valid over the ring $\mathbb{Z}[q^{\pm 1},u_1,\ldots,u_d]$ and it is known that the basis given in \ref{thm:basis_AK} is a basis over this ring. This implies that the dimension of the algebra $\mathcal{H}(d,n)$ is independent of the field $\Bbbk$ and the choice of $q\in \Bbbk^*$ and of $u_1,\ldots,u_d \in \Bbbk$. 

Therefore, if $M^{\mathfrak{p}}(\Lambda)$ is tilting in an appropriate category $\mathcal{O}$ of infinite dimensional $\mathcal{U}_q(\mathfrak{gl}_n)$-modules, the map $\mathcal{H}(d,n)\rightarrow\End_{\mathcal{U}_{\xi}(\mathfrak{gl}_n)}(M^{\mathfrak{p}}(\Lambda)\otimes V^{\otimes n})$ would be surjective for any base field $\Bbbk$.

If we want to consider the row-quotients $\mathcal{H}_{\underline{m}}(d,n)$ of $\mathcal{H}(d,n)$, one must first give a definition which does not rely on the semisimplicity of the algebra $\mathcal{H}(d,n)$ so that the map $\mathcal{H}(d,n)\rightarrow\End_{\mathcal{U}_{\xi}(\mathfrak{gl}_n)}(M^{\mathfrak{p}}(\Lambda)\otimes V^{\otimes n})$ factors through $\mathcal{H}_{\underline{m}}(d,n)$ and then study the existence of an integral basis of $\mathcal{H}_{\underline{m}}(d,n)$.

Let us stress that these arguments depend heavily on $M^{\mathfrak{p}}(\Lambda)$ being tilting and on the existence of an integral basis of $\mathcal{H}_{\underline{m}}(d,n)$. One may need some extra assumptions on the field $\Bbbk$, as for example being infinite, or on the parameters of the parabolic Verma module. This non-semisimple behavior deserves further study, which was outside the scope of this paper. 



\bibliographystyle{bibliography/habbrv}
\bibliography{bibliography/biblio}

\begin{thebibliography}{10}

\bibitem{andersen_stroppel_tubbenhauer}
H.~H. Andersen, C.~Stroppel, and D.~Tubbenhauer.
\newblock Cellular structures using {$U_q$}-tilting modules.
\newblock {\em Pacific J. Math.}, 292(1):21--59, 2018.

\bibitem{ariki}
S.~Ariki.
\newblock On the {S}emi-simplicity of the {H}ecke {A}lgebra of
  {$(\mathbb{Z}/r\mathbb{Z})\wr \mathfrak{S}_n$}.
\newblock {\em J. Algebra}, 169(1):216--225, 1994.

\bibitem{ariki-koike}
S.~Ariki and K.~Koike.
\newblock A {H}ecke {A}lgebra of {$(\mathbb{Z}/r\mathbb{Z})\wr \mathfrak{S}_n$}
  and {C}onstruction of its {I}rreducible {R}epresentations.
\newblock {\em Adv. Math.}, 106(2):216--243, 1994.

\bibitem{Ariki-Terasoma-Yamada}
S.~Ariki, T.~Terasoma, and H.~Yamada.
\newblock {Schur--Weyl reciprocity for the Hecke algebra of
  $(\mathbb{Z}/r\mathbb{Z})\wr \mathfrak{S}_n$}.
\newblock {\em J. Algebra}, 178(2):374--390, 1995.

\bibitem{BDEHJLNSS}
M.~Balagovi\'c, Z.~Daugherty, I.~Entova-Aizenbud, I.~Halacheva, J.~Hennig,
  M.~S. Im, G.~Letzter, E.~Norton, V.~Serganova, and C.~Stroppel.
\newblock {The affine VW supercategory}.
\newblock {\em Sel. Math. New Ser.}, 26, 2020.

\bibitem{multiparameter_SW}
H.~Bao, W.~Wang, and H.~Watanabe.
\newblock {Multiparameter quantum Schur duality of type $B$}.
\newblock {\em Proc. Amer. Math. Soc.}, 146(8):3203--3216, 2018.

\bibitem{broue-malle}
M.~Brou\'{e} and G.~Malle.
\newblock Zyklotomische {H}eckealgebren.
\newblock Number 212, pages 119--189. 1993.
\newblock Repr\'{e}sentations unipotentes g\'{e}n\'{e}riques et blocs des
  groupes r\'{e}ductifs finis.

\bibitem{chari-pressley}
V.~Chari and A.~Pressley.
\newblock {\em A guide to quantum groups}.
\newblock Cambridge University Press, Cambridge, 1994.

\bibitem{CP-SW-affine}
V.~Chari and A.~Pressley.
\newblock {Quantum affine algebras and affine Hecke algebras}.
\newblock {\em Pacific J. Math.}, 174(2):295--326, 1996.

\bibitem{cherednik}
I.~V. Cherednik.
\newblock A new interpretation of {G}elfand-{T}zetlin bases.
\newblock {\em Duke Math. J.}, 54(2):563--577, 1987.

\bibitem{DR}
Z.~Daugherty and A.~Ram.
\newblock {Two boundary Hecke Algebras and combinatorics of type $C$}.
\newblock 2018, arXiv: 1804.10296v1.

\bibitem{doty}
S.~Doty.
\newblock Schur-{W}eyl duality in positive characteristic.
\newblock In {\em Representation theory}, volume 478 of {\em Contemp. Math.},
  pages 15--28. Amer. Math. Soc., Providence, RI, 2009.

\bibitem{drinfeld_almost}
V.~Drinfeld.
\newblock On almost cocommutative hopf algebras.
\newblock {\em Leningrad Math. J.}, 1(2):321--342, 1990.

\bibitem{geck-jacon}
M.~Geck and N.~Jacon.
\newblock {\em Representations of {H}ecke algebras at roots of unity},
  volume~15 of {\em Algebra and Applications}.
\newblock Springer-Verlag London, Ltd., London, 2011.

\bibitem{ILZ}
K.~Iohara, G.~I. Lehrer, and R.~B. Zhang.
\newblock {Schur--Weyl duality for certain infinite dimensional
  $U_q(\mathfrak{sl}_2)$-modules}.
\newblock 2018, arXiv: 1811.01325v2.

\bibitem{Jimbo}
M.~Jimbo.
\newblock {A $q$-analogue of $U(\mathfrak{gl}(N+1))$, Hecke algebra, and the
  Yang-Baxter equation}.
\newblock {\em Lett. Math. Phys.}, 11(3):247--252, 1986.

\bibitem{KL}
D.~Kazhdan and G.~Lusztig.
\newblock Tensor structures arising from affine {L}ie algebras i-iv.
\newblock {\em J. Amer. Math. Soc.}, 6-7:905--947, 949--1011,335--381,383--453,
  1993-1994.

\bibitem{LNV}
A.~Lacabanne, G.~Naisse, and P.~Vaz.
\newblock {Tensor product categorifications, Verma modules, and the blob
  algebra}.
\newblock 2020, arxiv: 2005.06257v1.

\bibitem{Martin-Saleur}
P.~Martin and H.~Saleur.
\newblock {The blob algebra and the periodic Temperley-Lieb algebra}.
\newblock {\em Lett. Math. Phys.}, 30(3):189--206, 1994.

\bibitem{generalized_blob}
P.~Martin and D.~Woodcock.
\newblock Generalized blob algebras and alcove geometry.
\newblock {\em LMS J. Comput. Math.}, 6:249--296, 2003.

\bibitem{mazorchuk}
V.~Mazorchuk.
\newblock {\em Lectures on algebraic categorification}.
\newblock QGM Master Class Series. European Mathematical Society (EMS),
  Z\"{u}rich, 2012.

\bibitem{RT}
D.~Rose and D.~Tubbenhauer.
\newblock {HOMFLYPT homology for links in handlebodies via type A Soergel
  bimodules}.
\newblock 2019, arXiv: 1908.06878v1.

\bibitem{RSVV}
R.~Rouquier, P.~Shan, M.~Varagnolo, and E.~Vasserot.
\newblock {Categorifications and cyclotomic rational double affine Hecke
  algebras}.
\newblock {\em Invent. Math.}, 204(3):671--786, 2016.

\bibitem{Sakamoto-Shoji}
M.~Sakamoto and T.~Sakamoto.
\newblock {Schur--Weyl reciprocity for Ariki--Koike algebras}.
\newblock {\em J. Algebra}, 221(1):293--314, 1999.

\end{thebibliography}


\end{document}